\documentclass[11pt]{article}
\usepackage{amsmath, amscd, amssymb, latexsym, epsfig, color, amsthm, multicol, authblk}
\usepackage[all]{xy}
\usepackage{tikz}
\usepackage{verbatim, hyperref}
\usepackage{float}

\setlength{\textwidth}{6.5in}
\setlength{\textheight}{8.6in}
\setlength{\topmargin}{0pt}
\setlength{\headsep}{0pt}
\setlength{\headheight}{0pt}
\setlength{\oddsidemargin}{0pt}
\setlength{\evensidemargin}{0pt}
\flushbottom
\pagestyle{plain}

\numberwithin{equation}{section}

\newtheorem{theorem}{Theorem}[section]
\newtheorem{proposition}[theorem]{Proposition}

\newtheorem{conjecture}[theorem]{Conjecture}
\newtheorem{lemma}[theorem]{Lemma}

\theoremstyle{definition}
\newtheorem{definition}[theorem]{Definition}
\newtheorem{example}[theorem]{Example}

\newtheorem{remark}[theorem]{Remark}

\DeclareMathOperator\lk{\mathrm{lk}}
\DeclareMathOperator\st{\mathrm{st}}

\DeclareMathOperator{\conv}{\mathrm{conv}}

\DeclareMathOperator{\dist}{\mathrm{dist}}

\newcommand{\field}{{\bf k}}

\newcommand{\R}{{\mathbb R}}

\newcommand{\Z}{{\mathbb Z}}

\newcommand{\Sp}{\mathbb{S}}
\newcommand{\twprod}{\mathbin{\mathchoice%
		{\ooalign{\raise1.15ex\hbox{$\scriptstyle\sim$}\cr\hidewidth$\times$\hidewidth\cr}}%
		{\ooalign{\raise1.15ex\hbox{$\scriptstyle\sim$}\cr\hidewidth$\times$\hidewidth\cr}}%
		{\ooalign{\raise.85ex\hbox{$\scriptscriptstyle\sim$}\cr\hidewidth$\scriptstyle\times$\hidewidth\cr}}%
		{\ooalign{\raise.65ex\hbox{$\scriptscriptstyle\sim$}\cr\hidewidth$\scriptscriptstyle\times$\hidewidth\cr}}%
}}

\title{Minimal flag triangulations of lower-dimensional manifolds}
\author[1]{Christin Bibby \thanks{bibby@umich.edu}}
\author[1]{Andrew Odesky\thanks{aodesky@umich.edu}}
\author[1]{Mengmeng Wang\thanks{mengmw@umich.edu}}
\author[1]{Shuyang Wang\thanks{shuywang@umich.edu}}
\author[1]{Ziyi Zhang\thanks{zizh@umich.edu}}
\author[1]{Hailun Zheng\thanks{hailunz@umich.edu}}
\affil{University of Michigan\protect \\ Department of Mathematics\protect \\
University of Michigan\protect \\Ann Arbor, Michigan 48109-1043, USA}
\begin{document}
\maketitle
\begin{abstract}
	We prove the following results on flag triangulations of 2- and 3-manifolds. In dimension 2, we prove that the vertex-minimal flag triangulations of $\R P^2$ and $\Sp^1\times \Sp^1$ have 11 and 12 vertices, respectively. In general, we show that $8+3k$ (resp. $8+4k$) vertices suffice to obtain a flag triangulation of the connected sum of $k$ copies of $\R P^2$ (resp. $\Sp^1\times \Sp^1$). In dimension 3, we describe an algorithm based on the Lutz-Nevo theorem which provides supporting computational
	evidence for the following generalization of the Charney-Davis conjecture: for any flag 3-manifold,  $\gamma_2:=f_1-5f_0+16\geq 16 \beta_1$, where $f_i$ is the number of $i$-dimensional faces and $\beta_1$ is the first Betti number over a field $\field$. The conjecture is tight in the sense that for any value of $\beta_1$, there exists a flag 3-manifold for which the equality holds.
\end{abstract}
\begin{keywords}
{flag complexes, triangulations of manifolds, Charney-Davis conjecture}
\end{keywords}

\section{Introduction}\label{Section 1}
A simplicial complex is called flag, if all of its minimal non-faces have cardinality 2. The notion of flagness arises naturally from differential geometry. Gromov \cite{Gromov} noticed that when a piecewise Euclidean cubical complex is associated with the right-angled metric, the property that the complex is non-positively curved is equivalent to the condition that every vertex link in the complex is flag. Many classes of simplicial complexes with interesting combinatorial structures are flag; for example, the barycentric subdivisions of simplicial complexes, order complexes of posets, and Coxeter complexes. 

In this paper, we focus on a fundamental problem in combinatorial topology of flag complexes: what is the minimum number of vertices that a flag triangulation of a manifold can have? For instance, the boundary complex of a $d$-simplex gives the minimal triangulation of $\Sp^{d-1}$. In contrast, a flag triangulation of $\Sp^{d-1}$ requires at least $2d$ vertices. The vertex-minimal flag triangulation of $\Sp^{d-1}$ is given by the octahedral $(d-1)$-sphere and is unique. However, to the best knowledge of the authors, the vertex-minimal flag triangulations of any other manifolds (even the non-spherical surfaces) remain unknown.

We remark that it is not possible for us to detect flag triangulations of surfaces from existing references: most enumerative results search triangulated manifolds with up to a certain number of vertices, while we expect that the minimal flag triangulations require significantly more vertices than non-flag ones. For example, in \cite{Sulanke-Lutz} it is shown that there are 645592 distinct triangulations of $\Sp^1\times \Sp^1$ with up to 12 vertices, and we find that exactly one of these triangulations is flag. Even though there are enumerations of small triangulations of surfaces (see eg. \cite{Sulanke-Lutz,manifold_page}), in most of the references, only the $f$-vectors and types of manifolds are given, which is not sufficient to check flagness.

The first part of our paper aims at finding the minimal flag triangulations for several surfaces. The method is based on a theorem in \cite{Nevo-Lutz}, which states that any two flag homeomorphic PL manifolds can be connected via a sequence of edge subdivisions or admissible edge contractions. By computer search we find
\begin{itemize}
	\item two non-isomorphic 11-vertex flag triangulations of the real projective plane,
	\item one 12-vertex flag triangulation of the torus, and
	\item 28 non-isomorphic 14-vertex flag triangulations of the Klein bottle.
\end{itemize}

Based on the properties of flagness, we further prove that the 11-vertex and the 12-vertex flag triangulations of $\R P^2$ and $\Sp^1\times \Sp^1$ are indeed vertex-minimal. By defining a new connected sum for flag complexes, we also generate small flag triangulations of all other surfaces. Specifically, we show that $8+4k$ (resp. $8+3k$) vertices suffice to give a flag triangulation of the connected sum of $k$ tori (resp. real projective planes).

The second part of this paper explores $\gamma_2$-minimal flag 3-manifolds. The celebrated Charney-Davis conjecture \cite{Charney-Davis} asserts that for any flag $(2n-1)$-dimensional simplicial sphere $\Delta$, \[(-1)^n\left( 1-\frac{1}{2}f_0(\Delta)+\frac{1}{4}f_1(\Delta)-\dots +\left(-\frac{1}{2}\right)^{2n}f_{2n-1}(\Delta)\right) \geq 0,\] 
where $f_i$ is the number of $i$-dimensional faces. The Charney-Davis conjecture is an implication of the long-standing Hopf-Chern-Thurston conjecture from a combinatorial perspective; see \cite{Charney-Davis}, \cite{Forman} for motivation of the conjecture. The conjecture is proved for $n=2$ in \cite{DavisOkun-dimension 3 CD conjecture} using heavy machinery in differential geometry, and is now known as the Davis-Okun theorem. In this case, the above inequality can be rephrased using $\gamma$-numbers as $\gamma_2(\Delta):=f_1(\Delta)-5f_0(\Delta)+16\geq 0$. (We refer to \cite{Gal} for background on the $\gamma$-numbers and other lower-bound type conjectures for flag spheres.) It is natural to ask if the Davis-Okun theorem has an extension for flag 3-manifolds. In this paper we explore the connection between the minimal $\gamma_2$ for flag triangulations of a 3-manifold $M$ and the first Betti number of $M$. Based on all flag 3-manifolds that we've constructed, we propose a conjecture that $\gamma_2\geq 16 \beta_1$. Furthermore, we prove that for any integer $b\geq 0$, there exists a flag 3-manifold that attains $\gamma_2=16b$. See Section \ref{Section 5} for more discussion.

The structure of the paper is as follows. In Section \ref{Section 2}, we review the basic definitions and properties of flag complexes. In Section \ref{Section 3}, we present the algorithm for computer search and describe the small triangulations of several surfaces that we have found. In Section \ref{Section 4}, we prove that the minimal flag triangulations of $\R P^2$ and $\Sp^1\times \Sp^1$ have exactly 11 and 12 vertices, respectively, and describe how to construct small flag triangulations of other surfaces. We close in Section \ref{Section 5} by discussing potential extension to a manifold Charney-Davis conjecture based on computer search results on flag 3-manifolds. 

\section{Preliminaries}\label{Section 2}
A \emph{simplicial complex} $\Delta$ on a vertex set $V=V(\Delta)$ is a collection of subsets $\sigma\subseteq V$, called \emph{faces}, that is closed under inclusion. A subset $W$ of $V(\Delta)$ is a \emph{minimal non-face} of $\Delta$, or a \emph{missing face} of $\Delta$, if all proper subsets of $W$ are faces of $\Delta$ and $W\notin \Delta$. Two examples of simplicial complexes are the $d$-dimensional simplex on $V$, $\overline{V}:=\{\tau \ : \ \tau\subseteq V\}$, and its boundary complex, $\partial\overline{V}:=\{\tau : \tau\subsetneq V\}$. For $\sigma\in \Delta$, let $\dim\sigma:=|\sigma|-1$ and define the \emph{dimension} of $\Delta$, $\dim \Delta$, as the maximal dimension of its faces. A maximal under inclusion face of $\Delta$ is called a \emph{facet}. If all facets of $\Delta$ are of the same dimension, then $\Delta$ is called \emph{pure}. Let $E(\Delta)$ be the set of 1-faces in $\Delta$. For brevity, we write $\{v\}$ as $v$.

For a $(d-1)$-dimensional simplicial complex $\Delta$, we let $f_i = f_i(\Delta)$ be the number of $i$-dimensional faces of $\Delta$ for $-1\leq i\leq d-1$. The vector $(f_{-1}, f_0, \ldots, f_{d-1})$ is called the $f$\emph{-vector} of $\Delta$. If $\Delta$ is a simplicial complex and $\sigma$ is a face of $\Delta$, the \emph{link} of $\sigma$ in $\Delta$ is $\lk(\sigma,\Delta):=\{\tau-\sigma\in \Delta: \sigma\subseteq \tau\in \Delta\}$, and the \emph{star} of $\sigma$ in $\Delta$ is $\st(\sigma,\Delta):=\{\tau\in \Delta: \sigma\cup \tau\in \Delta\}$. When the context is clear, we will abbreviate the notation and write them as $\lk(\sigma)$ and $\st(\sigma)$ respectively. The \emph{restriction} of $\Delta$ to a vertex set $W$ is defined as $\Delta[W]:=\{\sigma\in \Delta:\sigma\subseteq W\}$. 

If $\Delta$ and $\Gamma$ are simplicial complexes defined on disjoint vertex sets, we define the \emph{join} of $\Delta$ and $\Gamma$ to be the simplicial complex $\Delta * \Gamma = \{\sigma \cup \tau : \sigma \in \Delta, \tau \in \Gamma\}$. Given a face $\sigma \in \Delta$, the \emph{stellar subdivision of $\Delta$ along $\sigma$} is $$\mathrm{sd}(\sigma, \Delta) = \{\tau \in \Delta : \tau \cap \sigma = \emptyset \} \cup (\bar{v} * \partial\bar{\sigma} * \lk(\sigma, \Delta)),$$ where $\bar{v}$ is a new vertex. If $e=\{u,v\}\in \Delta$, then we define the \emph{edge contraction of $\Delta$ along $e$} to be
	$$\mathrm{contr}(e, \Delta) = \{\tau \in \Delta : u \notin \tau\} \cup \{(\tau \cup v)\setminus u : u \subset \tau \in \Delta\}.$$

A $(d-1)$-dimensional simplicial complex $\Delta$ is called a \emph{simplicial $(d-1)$-manifold} (or \emph{triangulated $(d-1)$-manifold}) if its geometric realization $\|\Delta\|$ is homeomorphic to a manifold. A \emph{combinatorial $(d-1)$-manifold} (or \emph{PL $(d-1)$-manifold}) is a simplicial complex such that every vertex link is PL homeomorphic to the boundary of a $(d-1)$-simplex or the $(d-2)$-simplex. In particular, if $\Delta$ is a combinatorial manifold, then the \emph{boundary complex} of $\Delta$, denoted as $\partial \Delta$, consists of the empty face and the faces whose links are combinatorial balls. The boundary complex of a combinatorial $d$-ball is a combinatorial $(d-1)$-sphere. The faces that are not in the boundary complex are called the \emph{interior faces}. In dimension $d-1=2$ or $3$, the class of combinatorial manifolds and the class of simplicial manifolds are the same. However, there exist non-PL triangulations of the $(d-1)$-sphere on $d + 12$ vertices
for all $d-1\geq 5$, see \cite[Theorem 7]{Bjorner-Lutz}.

One advantage of working in the class of combinatorial manifolds can be explained by the following elegant theorem \cite{Alexander}.

\begin{theorem}[Alexander]\label{thm: Alexander}
	Two closed combinatorial manifolds are piecewise linear homeomorphic if and only if there exists a finite sequence of edge subdivisions and their inverses leading from one combinatorial manifold to the other. 
\end{theorem}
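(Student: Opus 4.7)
The statement is an equivalence, and only the forward direction carries real content. For the reverse direction, any edge subdivision (and, when defined, its inverse) is a simplicial refinement of the underlying polyhedron, hence preserves the PL homeomorphism type; composing finitely many such moves therefore cannot take one PL class to another.

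For the forward direction I would proceed in three stages. First, use the definition of PL homeomorphism to produce a simplicial complex $\Delta'$ that is simultaneously a subdivision of $\Delta_1$ and, after a simplicial identification along the given PL map, of $\Delta_2$. This reduces the problem to the following statement: if $\Delta'$ is an arbitrary simplicial subdivision of a combinatorial manifold $\Delta$, then $\Delta'$ can be reached from $\Delta$ by a finite sequence of edge subdivisions and their inverses. Second, invoke the classical Alexander--Newman theorem that any such $\Delta'$ can be produced from $\Delta$ by a finite sequence of \emph{stellar} subdivisions along faces of arbitrary dimension; the argument is by induction on the dimension of faces that still need to be refined, each step introducing a new vertex in the interior of some face of $\Delta$ until the combinatorics of $\Delta'$ is matched.

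The third and most delicate stage is to simulate each general stellar subdivision by a sequence of edge subdivisions and their inverses. For a stellar move along a $k$-face $\sigma = \{v_0, \dots, v_k\}$ with $k \geq 2$, the plan is to first introduce the apex vertex $\bar v$ by an edge subdivision on $\{v_0, v_1\}$, then use further edge subdivisions within $\st(\sigma,\Delta)$ to build up the join structure $\bar v * \partial \bar\sigma * \lk(\sigma, \Delta)$ appearing in the definition of $\mathrm{sd}(\sigma, \Delta)$, and finally remove the auxiliary vertices introduced along the way by inverse edge subdivisions. Formally one argues by induction on $k$: the case $k=1$ is tautological, and in the inductive step one first applies the inductive hypothesis to stellar moves on proper subfaces of $\sigma$ and then contracts the bookkeeping vertices.

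The main obstacle is this last step: an inverse edge subdivision is only admissible when a link condition holds, so one must order the edge subdivisions and their inverses so that every contraction is performed at a moment when its link condition is satisfied and the resulting complex remains a combinatorial manifold. In the combinatorial manifold setting every vertex link is a PL sphere, which allows one to arrange admissibility by taking a sufficiently fine stellar refinement before each contraction; nevertheless, carrying out this bookkeeping uniformly across the induction on $k$, while not accidentally altering the global PL type, is where the real technical work lies.
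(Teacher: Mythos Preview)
The paper does not prove this theorem. It is quoted as a classical result with a citation to Alexander's 1930 paper and is used only as background motivation for the Lutz--Nevo theorem that follows. There is therefore no ``paper's own proof'' to compare your proposal against.

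For what it is worth, your outline follows the standard classical route: pass through a common refinement, invoke the Alexander--Newman result that stellar moves generate all subdivisions, and then factor each stellar move on a higher-dimensional face into edge subdivisions and their inverses. You have correctly identified where the genuine difficulty sits, namely in arranging that every inverse move is admissible at the moment it is performed. As a high-level sketch this is fine; turning it into an honest proof would require you to actually carry out that bookkeeping, which is nontrivial and is precisely what Alexander's original paper (and later expositions) do.
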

It is well-known that if an edge $e=\{a,b\}$ in a closed combinatorial manifold $\Delta$ satisfies the {\em link condition} $\lk(e, \Delta)=\lk(a, \Delta)\cap \lk(b, \Delta)$, then the edge contraction $\mathrm{contr}(e, \Delta)$ preserves the PL type of $\Delta$, see \cite{Nevo-obstruction}. 

There are two $(d-1)$-dimensional sphere bundles over the circle. We denote by
$\Sp^{d-2}\times \Sp^1$ the orientable sphere bundle and by $\Sp^{d-2}\twprod
\Sp^1$ the non-orientable sphere bundle. If $M$ is a $(d-1)$-dimensional
manifold, then we write $\#_i M$ as the (topological) connected sum of $i$
copies of $M$. The connected sum of simplicial complexes is defined as an analog
of the topological connected sum: let $\Delta$ and $\Gamma$ be pure
$(d-1)$-dimensional simplicial complexes and let $\sigma\in \Delta$, $\tau\in
\Gamma$ be facets. The connected sum of $\Delta$ and $\Gamma$,
$\Delta\#_{\sigma\sim \tau} \Gamma$ (or simply $\Delta\#\Gamma$, if $\sigma$ and
$\tau$ are understood in the context), is the complex obtained from $\Delta\cup
\Gamma$ by removing $\sigma,\tau$ and then identifying $\partial\overline{\sigma}$
with $\partial\overline{\tau}$.

Let $\chi(\Delta)=\sum_{i=0}^{d-1}(-1)^i\beta_i(\Delta)$ be the \emph{Euler characteristic} of the $(d-1)$-dimensional complex $\Delta$, where $\beta_i(\Delta)$ is the rank of the $i$th homology group of $\Delta$ computed with coefficients in $\Z$. By the Euler characteristic formula, $\chi(\Delta)=\sum_{i=0}^{d-1} (-1)^i f_i(\Delta)$. Hence the $f$-vector of a triangulated surface $\Delta$ can be expressed as $$f(\Delta)=(1, f_0(\Delta), 3(f_0(\Delta)-\chi(\Delta)),2(f_0(\Delta)-\chi(\Delta))).$$The following classification theorem of surfaces will come in handy in our discussion of flag triangulated surfaces, see \cite{Seifert-Threlfall}.
\begin{theorem}[Classification theorem of surfaces]\label{thm: classification}
	Every closed and connected surface is homeomorphic to one
	of the following:
	(1) a sphere,
	(2) a connected sum of tori,
	(3) a connected sum of projective planes.
\end{theorem}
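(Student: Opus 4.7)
The plan is to follow the classical triangulate-and-normalize approach and show that every closed connected surface is homeomorphic to exactly one of the listed types. First, I would invoke Rad\'o's theorem to obtain a finite triangulation $T$ of $\Sigma$. Using $T$ and a spanning tree of the dual graph (whose vertices are the triangles of $T$ and whose edges join pairs of triangles sharing a common edge), the strategy is to assemble the triangles into a single $2n$-gon $P$ whose boundary edges are identified in pairs. Recording these pairings as a cyclic word in the alphabet $\{a_1, a_1^{-1}, a_2, a_2^{-1}, \dots\}$ encodes $\Sigma$ up to homeomorphism.

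Next, I would normalize the edge word through a sequence of combinatorial operations, each of which preserves the homeomorphism type of $\Sigma$: (i) cancellation of adjacent inverse pairs $aa^{-1}$ whenever the word has length greater than $2$; (ii) cyclic permutation of the word; and (iii) cut-and-paste along a diagonal of $P$, which amounts to splitting the word at a chosen position and regluing along the new edge. A preliminary reduction lemma first collapses all vertex equivalence classes to a single point, after which these moves can be used to bring every same-orientation pair $aa$ together (producing a cross-cap block) and to group each remaining opposite-orientation pair into a handle block $bcb^{-1}c^{-1}$.

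Third, once the word is in the intermediate form $a_1 a_1 \cdots a_k a_k \, b_1 c_1 b_1^{-1} c_1^{-1} \cdots b_g c_g b_g^{-1} c_g^{-1}$, I would apply the key identity $T^2 \# \R P^2 \cong \#_3 \R P^2$ (verified by an explicit cut-and-paste on a hexagon) to convert each handle block into three cross-cap blocks whenever at least one cross-cap block is present. Iterating this replacement reduces $\Sigma$ to one of exactly three canonical presentations: the empty word or $aa^{-1}$ (a sphere), $\prod_{i=1}^{g} a_i b_i a_i^{-1} b_i^{-1}$ (the connected sum $\#_g\, T^2$), or $\prod_{i=1}^{k} c_i c_i$ (the connected sum $\#_k\, \R P^2$).

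Finally, to see that the three families are pairwise non-homeomorphic, I would extract orientability and Euler characteristic directly from the canonical polygon: orientability separates family (3) from families (1) and (2), while $\chi(\#_g T^2) = 2-2g$ and $\chi(\#_k \R P^2) = 2-k$ distinguish the members within each family. The main obstacle lies in the second step: controlling the cut-and-paste moves carefully enough to guarantee that every edge word can be brought to one of the three canonical forms without introducing new unpaired edges or stranding vertex equivalence classes. The remaining ingredients are either classical (Rad\'o's triangulability) or reduce to short geometric computations on polygons with small boundary.
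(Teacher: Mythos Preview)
Your outline is the standard Seifert--Threlfall normalization argument and is essentially correct as a sketch; the paper, however, does not prove this theorem at all. Theorem~\ref{thm: classification} is quoted as background with a citation to \cite{Seifert-Threlfall}, so there is no ``paper's own proof'' to compare against---the reference you would be reproducing is precisely the one the authors invoke.

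One small caveat on your write-up: the statement as given in the paper asserts only that every closed connected surface is homeomorphic to \emph{one} of the three types, not that the types are mutually distinct. Your fourth paragraph (orientability and Euler characteristic) proves the stronger uniqueness claim, which is fine but goes beyond what is being cited. Also, in your intermediate normal form you should be explicit that the cross-cap and handle blocks may initially be interleaved rather than grouped, and that the reduction lemma bringing all vertices to a single equivalence class must precede the pair-consolidation moves (you say this, but the ordering matters for the argument to terminate). Otherwise the plan is sound.
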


A graph is an ordered pair $G=(V, E)$, such that $V$ is a finite set, and $E\subseteq \binom{V}{2}$. The graph of a simplicial complex $\Delta$ is $G(\Delta)=(V(\Delta), E(\Delta))$. A simplicial complex $\Delta$ is \emph{flag} if all minimal non-faces of $\Delta$ have cardinality 2; equivalently, $\Delta$ is the clique complex of $G(\Delta)$. For example, let $C_d^*=\conv\{\pm e_1, \dots, \pm e_d\}$ be the $d$-cross-polytope, where the $e_i$'s form the standard basis of $\R^d$. The boundary complex $\partial C_d^*$ is a flag $(d-1)$-sphere. The properties of flag complexes are described in the following lemma.
\begin{lemma}[{\cite[Lemma 5.2]{Nevo-Petersen}}]\label{lm: flag prop}
	Let $\Delta$ be a flag complex on vertex set $V$.
	\begin{enumerate}
		\item If $W\subseteq V(\Delta)$, then $\Delta[W]$ is also flag. 
		\item If $\sigma$ is a face in $\Delta$, then $\lk(\sigma)=\Delta[V(\lk(\sigma))]$. In particular, all links in a flag complex are also flag.
		\item If $W\subset V(\Delta)$, then $\|\Delta\|- \|\Delta[W]\|$  deformation retracts onto $\|\Delta[V - W]\|$.
		\item Any edge $\{v,v'\}$ in $\Delta$ satisfies the link condition $\lk(v)\cap \lk(v')=\lk(\{v,v'\})$. 
	\end{enumerate}
\end{lemma}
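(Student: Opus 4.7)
The overall plan is to prove each of the four items by repeatedly using the defining characterization of flagness: a subset $\sigma$ of $V$ is a face of $\Delta$ if and only if every 2-element subset of $\sigma$ is an edge of $\Delta$. For part (1), I would observe that a minimal non-face $\sigma \subseteq W$ of $\Delta[W]$ is automatically a minimal non-face of $\Delta$, since $\Delta[W]$ contains every face of $\Delta$ supported in $W$; flagness of $\Delta$ then forces $|\sigma| = 2$, so $\Delta[W]$ is flag. For part (2), the inclusion $\lk(\sigma) \subseteq \Delta[V(\lk(\sigma))]$ is immediate. For the reverse, given $\tau \in \Delta$ with $\tau \subseteq V(\lk(\sigma))$, I would verify that every 2-subset of $\sigma \cup \tau$ is an edge: pairs inside $\sigma$ or inside $\tau$ are edges because both are faces, while for a mixed pair $\{u,w\}$ with $u \in \sigma$ and $w \in \tau$, the condition $w \in V(\lk(\sigma))$ gives $\sigma \cup \{w\} \in \Delta$ and hence $\{u,w\} \in \Delta$. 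Flagness then forces $\sigma \cup \tau \in \Delta$, i.e., $\tau \in \lk(\sigma)$. The ``in particular'' statement follows by combining this with part (1) applied to $W = V(\lk(\sigma))$.

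For part (3), I would construct an explicit deformation retract by rescaling barycentric coordinates. Any point $p \in \|\Delta\| \setminus \|\Delta[W]\|$ has some support $\rho \in \Delta$ with $\rho \cap (V - W) \neq \emptyset$. Define $H(p, t)$ by multiplying each weight on a vertex of $\rho \cap W$ by $(1 - t)$ and renormalizing so the weights still sum to 1. For $0 \leq t < 1$ the support remains $\rho$, while at $t = 1$ the support becomes $\rho \cap (V - W)$, which lies in $\Delta[V - W]$ because subsets of faces are faces. Continuity and the retract property are routine to check face by face. It is worth noting that flagness plays no role in this step.

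For part (4), the containment $\lk(\{v,v'\}) \subseteq \lk(v) \cap \lk(v')$ is immediate. Conversely, for $\tau \in \lk(v) \cap \lk(v')$, every 2-subset of $\tau \cup \{v, v'\}$ is an edge of $\Delta$: such a pair either lies entirely in $\tau \cup \{v\} \in \Delta$, entirely in $\tau \cup \{v'\} \in \Delta$, or equals $\{v, v'\}$, which is an edge by hypothesis. Flagness then yields $\tau \cup \{v, v'\} \in \Delta$, so $\tau \in \lk(\{v, v'\})$. The main recurring technique, used in both (2) and (4), is the same ``fill-in'' argument: check that a candidate face has no missing edges and invoke flagness. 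I do not anticipate any real obstacle; the only step needing a little care is the verification in (3) that $H$ is jointly continuous, and this is a standard piecewise-linear argument on each closed simplex.
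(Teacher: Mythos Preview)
Your proof is correct in all four parts. However, note that the paper does not actually supply a proof of this lemma: it is stated with a citation to \cite[Lemma 5.2]{Nevo-Petersen} and used as a black box throughout. So there is no in-paper argument to compare against; your write-up simply fills in the standard details that the authors chose to outsource. One small remark: your observation that flagness is irrelevant for part (3) is accurate---the deformation retraction of $\|\Delta\|\setminus\|\Delta[W]\|$ onto $\|\Delta[V- W]\|$ holds for arbitrary simplicial complexes, since $\Delta[W]$ is by definition a full subcomplex.
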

	
	
	

The flag analog of Alexander's theorem is the following theorem, see \cite{Nevo-Lutz}.
\begin{theorem}[Lutz-Nevo]\label{thm: Nevo-Lutz}
	Two flag simplicial complexes are piecewise linearly homeomorphic if and only if they can be connected by a sequence of flag complexes, each obtained from the previous one by either an edge subdivision or edge contraction.
\end{theorem}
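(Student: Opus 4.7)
The ``if'' direction is a direct check. For an edge subdivision on a flag complex $\Delta$ at an edge $e=\{u,v\}$, I would analyze the cliques of the new graph: any clique containing the added vertex $\bar v$ has the form $\bar v\cup\alpha\cup\beta$ with $\alpha\subsetneq e$ and $\beta\subseteq V(\lk(e,\Delta))$. Using flagness of $\Delta$ and Lemma~\ref{lm: flag prop}(4), $\alpha\cup\beta$ is a face of $\st(e,\Delta)$, so $\bar v\cup\alpha\cup\beta$ is a face of the subdivided complex; this establishes flagness, and stellar subdivision is PL by definition. For edge contraction, Lemma~\ref{lm: flag prop}(4) supplies the link condition automatically, so any edge contraction that outputs a flag complex preserves PL type by the classical result cited in the excerpt.

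For the ``only if'' direction, I would first apply Alexander's theorem (Theorem~\ref{thm: Alexander}) to the given PL homeomorphism to obtain a sequence
\[
\Delta=\Gamma_0,\Gamma_1,\ldots,\Gamma_n=\Delta'
\]
of PL-homeomorphic combinatorial manifolds whose consecutive terms differ by a single edge subdivision or its inverse. The obstruction is that the intermediate $\Gamma_i$ need not be flag, so the strategy is to refine the sequence to stay inside the flag world. The key resource is that the barycentric subdivision $\sd(\Gamma)$ of any simplicial complex is flag, since it is the order complex of the face poset. My plan is to replace each $\Gamma_i$ by a sufficiently iterated barycentric subdivision $\sd^{N}(\Gamma_i)$ — automatically flag — and then connect $\sd^{N}(\Gamma_i)$ to $\sd^{N}(\Gamma_{i+1})$ by flag-preserving edge moves that locally undo, at the refined scale, the single edge subdivision $\Gamma_i\to\Gamma_{i+1}$.

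The main obstacle is realizing this deeper interpolation by edge moves that preserve flagness at every intermediate step. Two technical pieces are required. First, one must express the passage from a flag complex $\Gamma$ to $\sd(\Gamma)$ within the flag world, which requires decomposing each stellar subdivision at a higher-dimensional face into edge-level moves that never introduce a missing triangle. Second, one must collapse the discrepancy between $\sd^{N}(\Gamma_i)$ and $\sd^{N}(\Gamma_{i+1})$ by contracting edges in a carefully chosen order. Since the combinatorial neighborhood of a single edge subdivision is $\bar v*\partial\bar e*\lk(e,\Gamma_i)$, both tasks reduce to a finite case analysis on edge links. Verifying that no minimal non-face of size at least $3$ is ever created during this cascade of contractions — and choosing a contraction order that guarantees this — is where the technical heart of the proof lies.
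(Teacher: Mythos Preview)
This theorem is not proved in the paper. It is quoted from \cite{Nevo-Lutz} (Lutz and Nevo, \emph{Stellar theory for flag complexes}) and used as a black box to justify the search algorithm in Section~\ref{Section 3}. There is therefore no ``paper's own proof'' to compare your proposal against.

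As a separate comment on your sketch: your outline is broadly in the spirit of the actual Lutz--Nevo argument, which also routes through barycentric-type refinements and Alexander's theorem, but your write-up stops exactly where the real work begins. You correctly identify the two technical pieces --- (i) connecting a flag complex to its barycentric subdivision by flag-preserving edge moves, and (ii) transporting a single Alexander move across the refinement --- but you give no mechanism for either. The phrase ``both tasks reduce to a finite case analysis on edge links'' is not justified: the link of an edge in a general flag complex is an arbitrary flag complex of one lower dimension, so there is no finite list of cases. What Lutz and Nevo actually do is give an explicit ordering of stellar subdivisions (by decreasing face dimension) and prove that each step, performed on a flag complex, can itself be decomposed into edge subdivisions while maintaining flagness; the argument is inductive on dimension rather than a case check. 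Your sketch also glosses over a subtlety in the ``if'' direction: the statement is for arbitrary flag simplicial complexes, not just closed manifolds, so invoking ``the classical result cited in the excerpt'' for edge contraction is not quite sufficient, since that result is stated there only for closed combinatorial manifolds.
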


\section{Algorithms and results}\label{Section 3}
It is well-known that the minimal flag triangulation of $\Sp^2$ is the octahedral 2-sphere with 6 vertices. Our goal in this section is to find the minimal flag triangulations of three other surfaces: the torus, the Klein bottle and the real projective plane. In the next section we will discuss how to generate small flag triangulations of all types of surfaces based on triangulations of these three surfaces.

Our implementation is based on the Lutz-Nevo theorem. Since in dimension $d=2, 3$, the class of combinatorial $d$-manifolds is the same as the class of triangulated $d$-manifolds, Theorem \ref{thm: Nevo-Lutz} guarantees that the minimal flag triangulation of a surface $M$ can be obtained by applying edge subdivisions and admissible edge contractions on a given (possibly very large) flag triangulation of $M$. In an edge subdivision, a chosen edge $e$ of the simplicial complex is divided into two edges, with the facets containing $e$ replaced by four facets containing the two new edges. The resulting complex is always flag. In an edge contraction, we choose an edge $e=\{a,b\}$ and identify $b$ with $a$; in other words, the edge $e$ is contracted to the vertex $a$. Faces of the form $F\cup \{b\}$ are replaced by the faces $F\cup\{a\}$. However, edge contraction does not always preserve flagness. The resulting complex is flag if and only if the edge contracted is not contained in any induced 4-cycle of the original simplicial complex; see \cite[Corollary 6.2]{Nevo-Lutz}. We call such an edge an \emph{admissible} edge. 

Our computer search algorithm is as follows: we first build a relatively small flag triangulation of a given surface and apply a random sequence of edge subdivisions on the complex until the number of vertices reaches a set number. Then a sequence of admissible edge contractions is performed until a local minimum on $f_0$ is attained, i.e., no more admissible edges exist in the complex. The above process of edge subdivisions followed by edge contractions is iterated a given number of times. In each iteration, we keep track of the number of vertices in the complexes.

In what follows, we summarize the smallest flag triangulations of $\R P^2$, $\Sp^1\times \Sp^1$ and $\Sp^1\twprod \Sp^1$ found by our implementation. 

\begin{figure}[h]
	\caption{Two 11-vertex flag triangulations of $\R P^2$}
	\begin{multicols}{2}
		\centering
		\begin{tikzpicture}[scale=1.3]
\node (8) at (0,0) {8};
\node (9) at (0,1) {9};
\node (1) at (0.951,.309) {1};
\node (4) at (.588,-.809) {4};
\node (7) at (-.588,-.809) {7};
\node (11) at (-.951,.309) {11};
\node (3a) at (0,2) {3};
\node (2a) at (.951,1.309) {2};
\node (6a) at (1.902,.618) {6};
\node (5a) at (1.539,-.5) {5};
\node (10a) at (1.176,-1.618) {10};
\node (3b) at (0,-1.618) {3};
\node (2b) at (-1.176,-1.618) {2};
\node (6b) at (-1.539,-.5) {6};
\node (5b) at (-1.902,.618) {5};
\node (10b) at (-.951,1.309) {10};
\draw (9)--(8)--(11)--(5b)--(10b)--(3a)--(2a)--(6a)--(1)--(8);
\draw (10b)--(2a);
\draw (5b)--(6b)--(2b)--(3b)--(10a);
\draw (6a)--(5a)--(10a)--(4)--(8)--(7)--(2b);
\draw (4)--(3b)--(7)--(4)--(5a)--(1)--(4);
\draw (9)--(2a)--(1)--(9)--(10b)--(11)--(6b)--(7)--(11)--(9);
		\end{tikzpicture}
		
		\begin{tikzpicture}[scale=1.3]
\node (8) at (0,0) {8};
\node (9) at (0,1) {9};
\node (1) at (0.951,.309) {1};
\node (4) at (.588,-.809) {4};
\node (7) at (-.588,-.809) {7};
\node (11) at (-.951,.309) {11};
\node (3a) at (0,2) {3};
\node (2a) at (.951,1.309) {2};
\node (6a) at (1.902,.618) {6};
\node (5a) at (1.539,-.5) {5};
\node (10a) at (1.176,-1.618) {10};
\node (3b) at (0,-1.618) {3};
\node (2b) at (-1.176,-1.618) {2};
\node (6b) at (-1.539,-.5) {6};
\node (5b) at (-1.902,.618) {5};
\node (10b) at (-.951,1.309) {10};
\draw (3a)--(9)--(8)--(11)--(5b)--(10b)--(3a)--(2a)--(6a)--(1)--(8);
\draw (5b)--(6b)--(2b)--(3b)--(10a);
\draw (6a)--(5a)--(10a)--(4)--(8)--(7)--(2b);
\draw (4)--(3b)--(7)--(4)--(5a)--(1)--(4);
\draw (9)--(2a)--(1)--(9)--(10b)--(11)--(6b)--(7)--(11)--(9);
		\end{tikzpicture}
		\label{figure: RP^2}
	\end{multicols}
\end{figure}
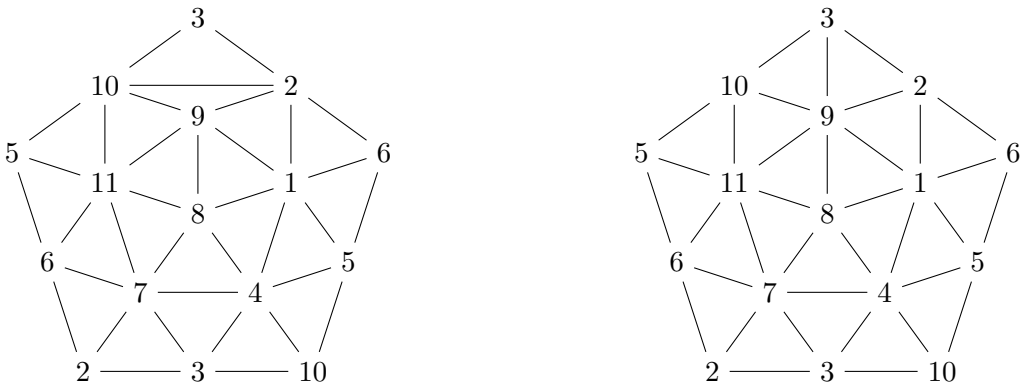
\begin{table}[h]
	\caption{Facets of two flag 11-vertex triangulations of $\R P^2$}\label{table: RP^2}
	\begin{multicols}{2}
		\centering
		\begin{tabular}{c c c c c c}\hline\hline
			5 6 11 &6 7 11 &7 8 11 &8 9 11 &9 10 11 \\ \hline
			5 10 11 & 1 2 9&2 9 10 & 2 3 10& 3 4 10\\ \hline
			4 5 10 & 1 4 5 & 1 5 6&1 2 6 &2 6 7 \\ \hline
			2 3 7 &3 4 7 &4 7 8 & 1 4 8 & 1 8 9 \\ \hline\hline
			
		\end{tabular}
		
		\begin{tabular}{c c c c c c}\hline\hline
			5 6 11 &6 7 11 &7 8 11 &8 9 11 &9 10 11 \\ \hline
			5 10 11 & 1 2 9&2 3 9& 3 9 10& 3 4 10\\ \hline
			4 5 10 & 1 4 5 & 1 5 6&1 2 6 &2 6 7 \\ \hline
			2 3 7 &3 4 7 &4 7 8 & 1 4 8 & 1 8 9 \\ \hline\hline
			
		\end{tabular}
	\end{multicols}
\end{table}
\begin{example}[Triangulation of $\mathbb RP^2$]
    We started with an 11-vertex flag triangulation of $\mathbb RP^2$ as shown in the left of Figure \ref{figure: RP^2}. Our program found another non-isomorphic construction of $\mathbb RP^2$ with 11 vertices (see the right of Figure \ref{figure: RP^2}).
Observe that, as the picture suggests, the right-hand triangulation has
automorphism group $\operatorname{Dih}_5$ (the symmetry group of the pentagon), while the left-hand triangulation does not admit an automorphism of order five. 
 The facets of these two triangulations are listed in Table \ref{table: RP^2}. In particular, they differ by one bistellar flip, i.e., by replacing the 2-faces $\{2,3,10\}, \{2,9,10\}$ with $\{2,3,9\}, \{3,9,10\}$. We will prove in the next section that the minimal flag triangulations of $\R P^2$ indeed have 11 vertices.
\end{example}

\begin{figure}[h]
	\caption{The 16-vertex flag triangulations of the torus (left) and the Klein bottle (right)}
	\begin{multicols}{2}
		\centering
		\begin{tikzpicture}[every edge/.style = {draw=black,very thick},vrtx/.style args = {#1/#2}{%
			circle, draw, thick, fill=white,
			minimum size=3mm, label=#1:#2}
		]
		\foreach \a in {-2,-1,0,1}
		{\draw (-2, \a)--(-\a,2);
			\draw (\a,-2)--(2,-\a);}
		
		\foreach \x in {-2,-1,0,1,2}
		\foreach \y in {-2,-1,0,1,2}
		{
			\draw (\x,\y)--(-\x,\y);
			\draw (\x, \y)--(\x,-\y);
		}
		\draw[->>, line width=0.4mm] (-0.5,-2) -- (0.5,-2);
		\draw[->>, line width=0.4mm] (-0.5,2) -- (0.5,2);
		\draw[->, line width=0.4mm] (-2,-0.5) -- (-2,0.5);
		\draw[->, line width=0.4mm] (2,-0.5) -- (2,0.5);
		\end{tikzpicture}
		\begin{tikzpicture}[every edge/.style = {draw=black,very thick},vrtx/.style args = {#1/#2}{%
			circle, draw, thick, fill=white,
			minimum size=3mm, label=#1:#2}
		]
		\foreach \a in {-2,-1,0,1}
		{\draw (-2, \a)--(-\a,2);
			\draw (\a,-2)--(2,-\a);}
		
		\foreach \x in {-2,-1,0,1,2}
		\foreach \y in {-2,-1,0,1,2}
		{
			\draw (\x,\y)--(-\x,\y);
			\draw (\x, \y)--(\x,-\y);
		}
		\draw[->>, line width=0.4mm] (-0.5,-2) -- (0.5,-2);
		\draw[->>, line width=0.4mm] (-0.5,2) -- (0.5,2);
		\draw[->, line width=0.4mm] (-2,0.5) -- (-2,-0.5);
		\draw[->, line width=0.4mm] (2,-0.5) -- (2,0.5);
		
		\end{tikzpicture}
	\end{multicols}	
    \label{figure: torus_candidate}
\end{figure}

\begin{example}[Triangulation of the torus]
    We started with a flag triangulation of the torus with 16 vertices as shown in the left of Figure \ref{figure: torus_candidate}. A unique flag triangulation with 12 vertices was found in the searching process. The facets are listed in Table \ref{table: torus}. We will show in the next section that this is indeed the unique minimal flag triangulation of the torus, see Figure \ref{Figure: S1S1}.
\end{example}
 \begin{table}[h]
 	\caption{Facets of the flag 12-vertex triangulation of $\Sp^1\times \Sp^1$}
 	\begin{center}
 		\begin{tabular}{c c c c c c}\hline\hline
 			1 2 3& 1 2 5 & 1 3 11 & 1 4 5 & 1 4 12 & 1 11 12\\ \hline 
 			2 3 8& 2 5 6& 2 6 7& 2 7 8& 3 8 9 & 3 9 10 \\ \hline 
 			3 10 11& 4 5 10&4 7 8 &4 7 10 &4 8 12 & 5 6 9\\ \hline 
 			5 9 10 &6 7 11 & 6 9 12 & 6 11 12& 7 10 11& 8 9 12\\ \hline\hline
 			
 		\end{tabular}
 	\end{center}
 \label{table: torus}
 \end{table}
     
\begin{example}[Triangulation of the Klein bottle]
    We started the searching process with a flag triangulation of the Klein bottle with 16 vertices as shown in Figure \ref{figure: torus_candidate}. Our program suggested that a minimal flag triangulation of the Klein bottle has 14 vertices, and there are at least 28 non-isomorphic such triangulations. We will see in the next section how to obtain several 14-vertex triangulations from 11-vertex flag triangulations of $\R P^2$.
\end{example}

\begin{remark}
	The minimal flag triangulation of $\Sp^1\times \Sp^1$ is exactly the vertex-transitive 2-manifold $^2 12^{83}_1$ found in \cite{manifold_page}. It admits the group action of $S_4\times S_3$. 
\end{remark}

\section{Proof of minimality}\label{Section 4}
In Section \ref{Section 3}, we saw that there exist flag triangulations of $\R P^2$ and $\Sp^1\times \Sp^1$ with 11 and 12 vertices, respectively. In this section, we prove that indeed our constructions give the minimal flag triangulations. The following lemma provides a necessary condition for a flag triangulation to be vertex-minimal.
\begin{lemma}\label{lm: every edge in a 4-cycle}
	Let $\Delta$ be a minimal flag triangulated surface. Then every edge is in an induced 4-cycle of $\Delta$.
\end{lemma}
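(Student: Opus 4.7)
The plan is to argue by contradiction, using the characterization of admissible edges together with the automatic link condition in flag complexes. Suppose $\Delta$ is a vertex-minimal flag triangulation of a surface $M$, but there exists an edge $e = \{a,b\} \in \Delta$ that is not contained in any induced 4-cycle of $\Delta$. I will show that contracting $e$ produces a strictly smaller flag triangulation of $M$, contradicting the minimality of $\Delta$.

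First I would verify that the contracted complex $\mathrm{contr}(e,\Delta)$ is still a flag triangulation of $M$. Flagness follows immediately from the criterion recalled in Section \ref{Section 3}: an edge contraction preserves flagness precisely when the edge lies in no induced 4-cycle, which is our hypothesis on $e$. For the topological type, I invoke Lemma \ref{lm: flag prop}(4), which gives the link condition $\lk(a)\cap\lk(b)=\lk(\{a,b\})$ for free in any flag complex. Since $\Delta$ is a 2-manifold (hence automatically a combinatorial manifold in dimension 2) and the link condition holds, the standard fact cited after Theorem \ref{thm: Alexander} guarantees that $\mathrm{contr}(e,\Delta)$ is PL homeomorphic to $\Delta$, and therefore is a triangulation of the same surface $M$.

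Finally, the edge contraction identifies the vertices $a$ and $b$, so $f_0(\mathrm{contr}(e,\Delta)) = f_0(\Delta) - 1$. This produces a flag triangulation of $M$ with strictly fewer vertices than $\Delta$, contradicting the assumption that $\Delta$ is vertex-minimal among flag triangulations of $M$. Hence no such edge $e$ exists, and every edge of $\Delta$ must lie in some induced 4-cycle.

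There is essentially no obstacle here: the proof is a direct assembly of three ingredients already recorded in Section \ref{Section 2} and \ref{Section 3}, namely (i) the automatic link condition for edges in flag complexes, (ii) the fact that edge contractions satisfying the link condition preserve PL type in dimensions $\leq 3$, and (iii) the criterion that an edge is admissible (its contraction preserves flagness) exactly when it is not contained in an induced 4-cycle. The only point worth stating explicitly is that in dimension 2 every simplicial manifold is a combinatorial manifold, so that (ii) applies without a separate PL hypothesis on $\Delta$.
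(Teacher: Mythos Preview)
Your proof is correct and follows the same approach as the paper: contrapose, contract an edge not lying in any induced 4-cycle, and invoke the admissibility criterion together with the link condition to conclude the contraction is a smaller flag triangulation of the same surface. Your write-up is simply more explicit about why the PL type is preserved (via Lemma~\ref{lm: flag prop}(4) and the fact that 2-manifolds are combinatorial), which the paper leaves implicit.
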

\begin{proof}
	Suppose not, then there is an edge $e=\{u,v\}\in \Delta$, such that it is not in any induced 4-cycle of $\Delta$. Then $\{u,v\}$ can be admissibly contracted into $\Delta'$, where $\Delta'$ is
	homeomorphic to $\Delta$ and $\Delta'$ contains no induced 3-cycle, i.e., $\Delta$ is flag. This contradicts that $\Delta$ is the minimal flag triangulation. 
\end{proof}

\subsection{The minimal flag triangulation of $\mathbb{R}P^2$}
In what follows, we denote by $(w_1, w_2, \dots, w_n)$ the $n$-cycle with edges $\{w_i, w_{i+1}\}$ for $1\leq i\leq n-1$ and $\{w_n, w_1\}$.
\begin{lemma}\label{lm: no adjacent 4-cycle links}
	Let $\Delta$ be a minimal flag triangulation of a surface $M$. If there is an edge $e=\{v_1, v_2\}\in \Delta$ such that both of the links $\lk(v_1), \lk(v_2)$ are 4-cycles, then $M\cong \Sp^2$ and $\Delta$ is the octahedral sphere.
\end{lemma}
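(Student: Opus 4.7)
The plan is to extract the combinatorial structure forced by the hypothesis that both $\lk(v_1)$ and $\lk(v_2)$ are 4-cycles, then invoke minimality (Lemma~\ref{lm: every edge in a 4-cycle}) to force one additional edge, and finally identify $\Delta$ as the boundary of the 3-cross-polytope $\partial C_3^*$.

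First I would set up notation. Since $M$ is a 2-manifold, the edge link $\lk(\{v_1,v_2\})$ consists of exactly two vertices, and by Lemma~\ref{lm: flag prop}(4) this set equals $\lk(v_1)\cap \lk(v_2)$; call its elements $a$ and $b$. Then the 4-cycle links can be labeled
\[
\lk(v_1) = (v_2, a, c, b),\qquad \lk(v_2) = (v_1, a, c', b),
\]
with $a,b$ the two neighbors of $v_2$ in $\lk(v_1)$ (resp.\ of $v_1$ in $\lk(v_2)$). Applying Lemma~\ref{lm: flag prop}(2) to each link (so the link equals the induced subcomplex on its vertex set) shows that $\{a,b\}$, $\{v_2,c\}$, and $\{v_1,c'\}$ are non-edges of $\Delta$, and in particular $c\neq c'$.

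The key step is to show that $\{c, c'\}$ must be an edge. I would apply Lemma~\ref{lm: every edge in a 4-cycle} to the edge $\{v_1, c\}$: an induced 4-cycle through $\{v_1, c\}$ has the form $(v_1, c, x, y)$ with $y\in \lk(v_1)\setminus \lk(c)$ and $x \in \lk(c)\setminus \lk(v_1)$ satisfying $\{x,y\}\in\Delta$. Since $\{c,a\}$ and $\{c,b\}$ are edges while $\{c,v_2\}$ is not, the only admissible choice is $y = v_2$; then $x$ must lie in $\lk(v_2)\cap \lk(c)$ and outside $\lk(v_1)$, and among the vertices $\lk(v_2)=\{v_1,a,c',b\}$ only $c'$ qualifies. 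Hence $\{c,c'\}$ is an edge of $\Delta$.

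Once $\{c,c'\}$ is an edge, flagness supplies the triangles $\{a,c,c'\}$ and $\{b,c,c'\}$ (each triple is pairwise adjacent). The link $\lk(c)$ then contains the two paths $a$-$v_1$-$b$ and $a$-$c'$-$b$; since $\lk(c)$ is itself a single cycle (manifold point), it must equal $(a,v_1,b,c')$. Symmetric arguments identify $\lk(c')$, $\lk(a)$, and $\lk(b)$ as 4-cycles on the same six vertices. Connectedness of $M$ prevents any further vertex, so $\Delta$ has exactly the six vertices $v_1,v_2,a,b,c,c'$ with the 8 triangles exhibited, which is $\partial C_3^*$; thus $\Delta$ is the octahedral sphere and $M\cong \Sp^2$.

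The main obstacle I anticipate is the induced-4-cycle step that forces $\{c,c'\}$ to be an edge: the flag structure alone would permit $\lk(a)$ and $\lk(b)$ to be longer cycles passing through additional vertices beyond $c, v_1, v_2, c'$, and it is precisely the minimality assumption, channelled through Lemma~\ref{lm: every edge in a 4-cycle} applied to $\{v_1,c\}$, that rules out such extensions and closes the complex to the octahedron.
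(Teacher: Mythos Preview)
Your proof is correct and follows essentially the same approach as the paper: both set up the two 4-cycle links sharing the common pair $\{a,b\}$, apply Lemma~\ref{lm: every edge in a 4-cycle} to the edge from $v_1$ to the ``far'' vertex $c$ of $\lk(v_1)$ to force the edge $\{c,c'\}$, and then use flagness to recognize the octahedral sphere. The only difference is cosmetic---the paper finishes by noting that the resulting graph is that of $\partial C_3^*$ and invoking flagness once, whereas you compute $\lk(c)$ explicitly---but the logic is identical.
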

\begin{proof} Assume that $\lk(v_1)=(v_5, v_3, v_2, v_4)$ and $\lk(v_2)=(v_1, v_3, v_6, v_4)$, as shown in the left of Figure \ref{figure: no 4,5-cycles}. By Lemma \ref{lm: flag prop}, $\lk(v_1)\cap \lk(v_2)=\lk(\{v_1,v_2\})$ is the union of two vertices $v_3, v_4$ and hence $v_5, v_6$ are distinct. Since $\Delta$ is a minimal flag triangulation, the edge $\{v_5,v_1\}$ must be in at least one induced 4-cycle $C$ of $\Delta$. Since $C$ is induced, $\{v_3,v_1\}$ or $\{v_4,v_1\}$ cannot be in $C$. So the edge $\{v_1,v_2\}$ is in $C$. Similarly, we have $\{v_2,v_6\}\subset C$. Hence $C=(v_5,v_1, v_2,v_6)$. Note that $G(\st(v_1)\cup \st(v_2)\cup \{v_5,v_6\})$ is the graph of an octahedral sphere. By flagness, this octahedral sphere is a subcomplex of $\Delta$, and hence $\Delta$ must be the octahedral sphere.
\end{proof}

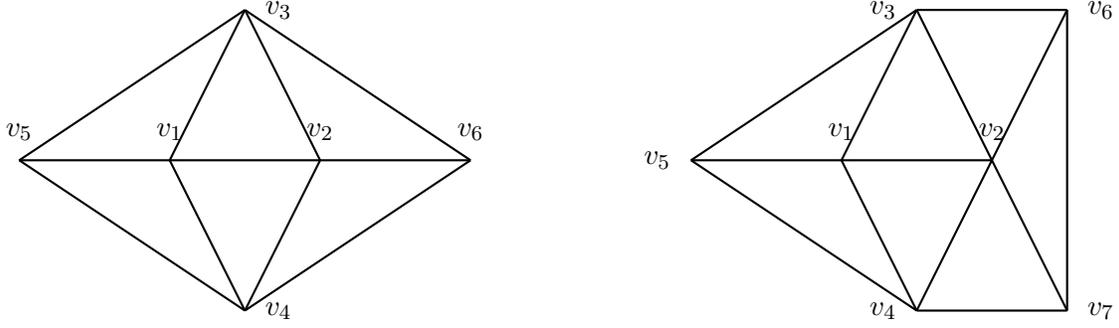
\begin{figure}[h]
	\centering	
	\begin{multicols}{2}
		\begin{tikzpicture}
		\node at (-1,0)[label=above:$v_1$]{};
		\node at (1,0)[label=above:$v_2$]{};
		\node at (0,2)[label=right:$v_3$]{};
		\node at (0,-2)[label=right:$v_4$]{};
		\node at (-3,0)[label=above:$v_5$]{};
		\node at (3,0)[label=above:$v_6$]{};
		\draw[thick](-3,0)--(3,0);
		\draw[thick](-1,0)--(0,-2);
		\draw[thick](-1,0)--(0,2);
		\draw[thick](1,0)--(0,-2);
		\draw[thick](1,0)--(0,2);
		\draw[thick](-3,0)--(0,-2);
		\draw[thick](3,0)--(0,2);
		\draw[thick](-3,0)--(0,2);
		\draw[thick](3,0)--(0,-2);
		
		\end{tikzpicture}
		
		\begin{tikzpicture}
		\node at (-1,0)[label=above:$v_1$]{};
		\node at (1,0)[label=above:$v_2$]{};
		\node at (0,2)[label=left:$v_3$]{};
		\node at (0,-2)[label=left:$v_4$]{};
		\node at (-3,0)[label=left:$v_5$]{};
		\node at (2,2)[label=right:$v_6$]{};
		\node at (2,-2)[label=right:$v_7$]{};
		
		\draw[thick](-3,0)--(1,0);
		\draw[thick](-1,0)--(0,-2);
		\draw[thick](-1,0)--(0,2);
		\draw[thick](1,0)--(0,-2);
		\draw[thick](1,0)--(0,2);
		\draw[thick](-3,0)--(0,-2);
		\draw[thick](2,2)--(0,2);
		\draw[thick](-3,0)--(0,2);
		\draw[thick](2,-2)--(0,-2);
		\draw[thick](1,0)--(2,2);
		\draw[thick](2,-2)--(1,0);
		\draw[thick](2,-2)--(2,2);
		\end{tikzpicture}
		
	\end{multicols}
	
	\caption{Links in the proof of Lemma \ref{lm: no adjacent 4-cycle links} (left), where $\deg v_1= \deg v_2=4$, and Lemma \ref{lm: no adjacent 4-cycle, 5-cycles} (right), where $\deg v_1=4, \deg v_2=5$.}
	\label{figure: no 4,5-cycles}
\end{figure}
We can strengthen Lemma \ref{lm: no adjacent 4-cycle links} as follows:
\begin{lemma}\label{lm: no adjacent 4-cycle, 5-cycles}
	Let $\Delta$ be a minimal flag triangulation of a surface $M$. Then no adjacent vertices can have degree 4 and 5, respectively, in $\Delta$.
\end{lemma}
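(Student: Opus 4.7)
The plan is to argue by contradiction and reduce to Lemma \ref{lm: no adjacent 4-cycle links}. Suppose $\{v_1, v_2\}$ is an edge of a minimal flag triangulation $\Delta$ with $\deg v_1 = 4$ and $\deg v_2 = 5$. Following the right-hand diagram of Figure \ref{figure: no 4,5-cycles}, I would label $\lk(v_1) = (v_5, v_3, v_2, v_4)$ and $\lk(v_2) = (v_1, v_3, v_6, v_7, v_4)$. By Lemma \ref{lm: flag prop}(4), $\lk(v_1) \cap \lk(v_2) = \lk(\{v_1, v_2\}) = \{v_3, v_4\}$, so $v_1, \dots, v_7$ are seven distinct vertices.

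Next I would apply Lemma \ref{lm: every edge in a 4-cycle} to the edge $\{v_1, v_5\}$. Any induced 4-cycle through $\{v_1, v_5\}$ has the form $(v_1, v_5, x, y)$ with $y \in \lk(v_1) \setminus \lk(v_5)$ and $x \in \lk(v_5) \setminus \lk(v_1)$; since $v_5$ is already joined to $v_3$ and $v_4$ in the 4-cycle $\lk(v_1)$, the only choice is $y = v_2$, and then $x$ must lie in $\lk(v_2) \cap \lk(v_5) \setminus \lk(v_1) \subseteq \{v_6, v_7\}$. Reversing the cyclic orientation of $\lk(v_2)$ (which exchanges $v_3 \leftrightarrow v_4$ and $v_6 \leftrightarrow v_7$) if necessary, I may assume $\{v_5, v_6\}$ is an edge of $\Delta$.

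I would then examine $\lk(v_3)$. Because $v_3$ is adjacent to each of $v_1, v_2, v_5, v_6$, flagness forces the four triangles $\{v_1, v_2, v_3\}$, $\{v_1, v_3, v_5\}$, $\{v_2, v_3, v_6\}$, $\{v_3, v_5, v_6\}$ to be faces of $\Delta$, so the edges $\{v_1, v_5\}, \{v_1, v_2\}, \{v_2, v_6\}, \{v_5, v_6\}$ all lie in $\lk(v_3)$. Since $\lk(v_3)$ is a cycle in which every vertex has exactly two neighbors, the two link-neighbors of each of $v_1, v_2, v_5, v_6$ are already accounted for; hence $\lk(v_3)$ must close up to the 4-cycle $(v_5, v_1, v_2, v_6)$ and $\deg v_3 = 4$. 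Then the edge $\{v_1, v_3\}$ has both endpoints of degree $4$, so Lemma \ref{lm: no adjacent 4-cycle links} forces $\Delta$ to be the octahedral 2-sphere, in which every vertex has degree $4$; this contradicts $\deg v_2 = 5$.

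The main obstacle I anticipate is the identification of $\lk(v_3)$ in the third step: one must rule out the possibility that extra vertices appear in $\lk(v_3)$, producing a longer cycle that would evade Lemma \ref{lm: no adjacent 4-cycle links}. The argument hinges on the fact that links in a simplicial $2$-manifold are cycles, so every vertex has exactly two neighbors there, and the four triangles produced by flagness already saturate the link-neighbors of $v_1, v_2, v_5, v_6$.
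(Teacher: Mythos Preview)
Your proof is correct and follows essentially the same route as the paper: set up the links of $v_1$ and $v_2$, use Lemma~\ref{lm: every edge in a 4-cycle} on $\{v_1,v_5\}$ to force the induced $4$-cycle $(v_5,v_1,v_2,v_6)$, deduce by flagness that $\lk(v_3)$ is exactly this $4$-cycle, and then invoke Lemma~\ref{lm: no adjacent 4-cycle links}. Your third paragraph spells out in detail the step the paper compresses into the single sentence ``By flagness $\lk(v_3)$ must be the $4$-cycle $C$,'' but the argument is the same.
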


\begin{proof} Assume that $\lk(v_1)=(v_5, v_3, v_2, v_4)$ and $\lk(v_2)=(v_1, v_3, v_6, v_7,v_4)$, as shown in the right of Figure \ref{figure: no 4,5-cycles}. Using the same proof as in Lemma \ref{lm: no adjacent 4-cycle links}, $v_5, v_6, v_7$ are distinct vertices and the induced 4-cycle $C$ that contains the edge $\{v_5,v_1\}$ must also contain $\{v_1,v_2\}$. By symmetry and without loss of generality, we may assume that $\{v_2,v_6\}$ is in $C$. Hence $C=(v_5, v_1, v_2, v_6)$. By flagness $\lk(v_3)$ must be the 4-cycle $C$. Now both $\lk(v_3)$ and  $\lk(v_1)$ are 4-cycles and $\{v_1,v_3\}\in \Delta$. By Lemma \ref{lm: no adjacent 4-cycle links}, $\Delta$ is the octahedral sphere. This contradicts that $\lk(v_2)$ is a 5-cycle. 
\end{proof}

Now we are ready to prove our main theorem.
\begin{theorem}
	Let $\Delta$ be a minimal flag triangulation of a surface $M\neq \Sp^2$. Then $f_0(\Delta)\geq 11$.
\end{theorem}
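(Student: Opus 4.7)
The plan is to argue by contradiction: assume $f_0(\Delta) \le 10$. Let $v$ be a vertex of minimum degree $d := \delta(\Delta) \ge 4$ (by flagness) and write $\lk(v) = (a_1, \dots, a_d)$. For each edge $\{a_i, a_{i+1}\}$ of $\lk(v)$, the surface hypothesis gives $\lk(\{a_i, a_{i+1}\}) = \{v, y_i\}$ for a unique vertex $y_i$, and flagness plus the cyclic structure force $y_i \in U := V(\Delta) \setminus (\{v\} \cup \lk(v))$. Thus $|U| = f_0 - 1 - d$, and also $a_j \not\sim a_k$ whenever $j, k$ are cyclically non-adjacent.

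The key observation is the following \emph{three-consecutive rule}: if some vertex $w$ is adjacent to three cyclically consecutive link vertices $a_{j-1}, a_j, a_{j+1}$, then by flagness both $\{a_{j-1}, a_j, w\}$ and $\{a_j, a_{j+1}, w\}$ are faces, so $\lk(a_j)$ contains the induced $4$-cycle $(a_{j-1}, v, a_{j+1}, w)$. Being itself a single cycle, $\lk(a_j)$ equals this $4$-cycle, forcing $\deg(a_j) = 4$. By Lemmas~\ref{lm: no adjacent 4-cycle links} and \ref{lm: no adjacent 4-cycle, 5-cycles} (together with $M \ne \Sp^2$), $\deg(a_j) \ge 6$ when $d = 4$, $\ge 5$ when $d = 5$, and $\ge d \ge 6$ when $d \ge 6$; so the rule yields a contradiction in every case.

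I would then dispatch the cases on $d$. If $d = 5$, any coincidence $y_i = y_j$ has cyclic distance at most $2$ in the $5$-cycle, so the rule applies; hence the five $y_i$'s are distinct, impossible as $|U| = f_0 - 6 \le 4$. If $d = 4$, the same observation makes the four $y_i$'s distinct; the five known vertices $\{v, a_{j-1}, a_{j+1}, y_{j-1}, y_j\}$ account for only $5$ of the $\ge 6$ neighbors of $a_j$, and the extra neighbor must come from $U \setminus \{y_1, \dots, y_4\}$ (any $y_k$ would produce three consecutive). Since $|U \setminus \{y_\ast\}| = f_0 - 9 \le 1$, either $f_0 \le 9$ and no extra exists, or $f_0 = 10$ and a unique $z$ is forced to be adjacent to all four $a_j$, making $\lk(z) = (a_1, a_2, a_3, a_4)$ and then $\lk(a_1) = (a_2, v, a_4, z)$, contradicting $\deg(a_1) \ge 6$. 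If $d \ge 6$, Euler's formula gives $6 f_0 \le 2 f_1 = 6(f_0 - \chi)$, whence $\chi \le 0$ and $M \ne \R P^2$; and $|U| \le 3 < d$, so two $y_i$'s coincide. A case check on cyclic distances in the $d$-cycle shows that apart from one exceptional configuration the coincidence produces three consecutive link-neighbors of some $y$, and the rule finishes the argument.

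The residual case --- the main obstacle --- is $d = 6$ with $|U| = 3$, $f_0 = 10$, and $y_i = y_{i+3}$ for $i = 1, 2, 3$. Each such $y_i$ is adjacent to the four link vertices $\{a_i, a_{i+1}, a_{i+3}, a_{i+4}\}$, and since $\deg(y_i) \ge 6$ its two remaining neighbors must be the other two elements of $U$. Hence $\{y_1, y_2, y_3\}$ is a triangle in $\Delta$, and then $y_1$ and $y_2$ share at least three common neighbors, namely $a_2$, $a_5$, and $y_3$, so $\lk(\{y_1, y_2\})$ contains at least three vertices. This contradicts the surface requirement that the link of an edge be a $0$-sphere, completing the proof.
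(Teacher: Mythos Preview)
Your argument is correct and takes a genuinely different route from the paper's. The paper fixes a 2-face $\{v_1,v_2,v_3\}$, uses Lemmas~\ref{lm: no adjacent 4-cycle links} and~\ref{lm: no adjacent 4-cycle, 5-cycles} to constrain the degree triple to $(4,\ge 6,\ge 6)$ or $(\ge 5,\ge 5,\ge 5)$, applies inclusion--exclusion to the three vertex links to get $f_0\ge 10$, and then rules out the two extremal degree sequences $(4,6,6)$ and $(5,5,6)$ at $f_0=10$ by drawing the resulting neighbourhoods explicitly and chasing non-edges. You instead fix a single minimum-degree vertex $v$, introduce the ``second layer'' vertices $y_i$, and organise everything around your three-consecutive rule; the case split is on $d=\delta(\Delta)$ rather than on a degree triple. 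Your approach is more uniform --- one observation drives every case --- and the residual configuration at $d=6$ is dispatched cleanly by the edge-link count; the paper's approach is shorter and more visual, since the two extremal pictures at $f_0=10$ can simply be inspected.

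Two small points. First, the Euler aside in your $d\ge 6$ case (``$\chi\le 0$ and $M\ne\R P^2$'') is never used and can be deleted. Second, the phrase ``a case check on cyclic distances \ldots apart from one exceptional configuration'' hides a short pigeonhole step you should spell out: for $d\ge 7$ one has $|U|\le 2$, so some value is taken by at least $\lceil d/2\rceil\ge 4$ of the $y_i$, and four edges of a $d$-cycle with $d\le 8$ must include two at cyclic distance $\le 1$; for $d=6$ with $|U|\le 2$, three equal $y_i$'s already force two at distance $\le 2$; only when $d=6$ and $|U|=3$ can every coincidence occur at distance $3$, and then the partition $y_1=y_4$, $y_2=y_5$, $y_3=y_6$ with three distinct values is forced. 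With that filled in, the residual analysis is complete.
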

\begin{proof}
	Let $F=\{v_1, v_2, v_3\}\in \Delta$. By Lemmas \ref{lm: no adjacent 4-cycle links} and \ref{lm: no adjacent 4-cycle, 5-cycles}, either $(\deg v_1, \deg v_2, \deg v_3)=(4, \geq 6, \geq 6)$ or $(\geq 5, \geq 5, \geq 5)$. 
	\begin{figure*}
		\begin{multicols}{2}
			\centering
			\begin{tikzpicture}
			\node at (-1,0)[label=below:$v_2$]{};
			\node at (1,0)[label=below:$v_3$]{};
			\node at (0,1)[label=above:$v_1$]{};
			\node at (0,-2)[label=below:$v_8$]{};
			\node at (2,-1)[label=right:$v_9$]{};
			\node at (-2,-1)[label=left:$v_7$]{};
			\node at (-2,1)[label=left:$v_6$]{};
			\node at (-1,2)[label=above:$v_5$]{};
			\node at (1,2)[label=above:$v_4$]{};
			\node at (2,1)[label=right:$v_{10}$]{};
			\draw[thick](-1,0)--(0,1);
			\draw[thick](-1,0)--(1,0);
			\draw[thick](-1,0)--(-2,1);
			\draw[thick](-1,0)--(-2,-1);
			\draw[thick](-1,0)--(0,-2);
			\draw[thick](-1,0)--(-1,2);
			\draw[thick] (0,1)-- (-1,2);
			\draw[thick] (0,1)--(1,2);
			\draw[thick] (1,2)--(2,1);
			
			\draw[thick] (1,0)--(0,1) ;
			\draw[thick] (1,0)--(2,1) ;
			\draw[thick] (1,0)--(2,-1) ;
			\draw[thick] (0,-2)--(2,-1) ;
			\draw[thick] (0,-2)--(1,0) ;
			
			\draw[thick] (2,1)--(2,-1) ;
			
			\draw[thick] (-2,-1)--(0,-2) ;
			\draw[thick] (-2,-1)--(-2,1);
			
			\draw[thick] (-1,2)--(-2,1);
			\draw[thick] (1,2)--(1,0);
			\draw[thick] (1,2)--(-1,2);
			\end{tikzpicture}
			\begin{tikzpicture}
			\node at (-1,-1)[label=above:$v_1$]{};
			\node at (0,0)[label=above:$v_3$]{};
			\node at (1,-1)[label=above:$v_2$]{};
			\node at (-1,1)[label=above:$v_4$]{};
			\node at (1,1)[label=above:$v_5$]{};
			\node at (-2,0)[label=left:$v_6$]{};
			\node at (2,0)[label=right:$v_7$]{};	\node at (-2,-2)[label=left:$v_8$]{};	\node at (2,-2)[label=right:$v_9$]{};	\node at (0,-3)[label=below:$v_{10}$]{};
			\draw[thick](-1,1)--(0,0)--(1,-1)--(2,-2);
			\draw[thick](1,1)--(-2,-2);	\draw[thick](-1,1)--(1,1)--(2,0)--(2,-2)--(0,-3)--(-2,-2)--(-2,0)--(-1,1);	\draw[thick](-2,0)--(2,0);	\draw[thick](-2,0)--(-1,-1)--(0,-3)--(1,-1)--(2,0);	\draw[thick](-1,-1)--(1,-1);
			\end{tikzpicture}
		
		\end{multicols}
	\caption{Links for case 1 of the proof of Theorem \ref{thm: minimal RP2}, where $(\deg v_1, \deg v_2, \deg v_3)=(4,6,6)$, and case 2 (right), where $(\deg v_1, \deg v_2, \deg v_3)=(5,5,6)$.}\label{fig: case 1,2}
	\end{figure*}
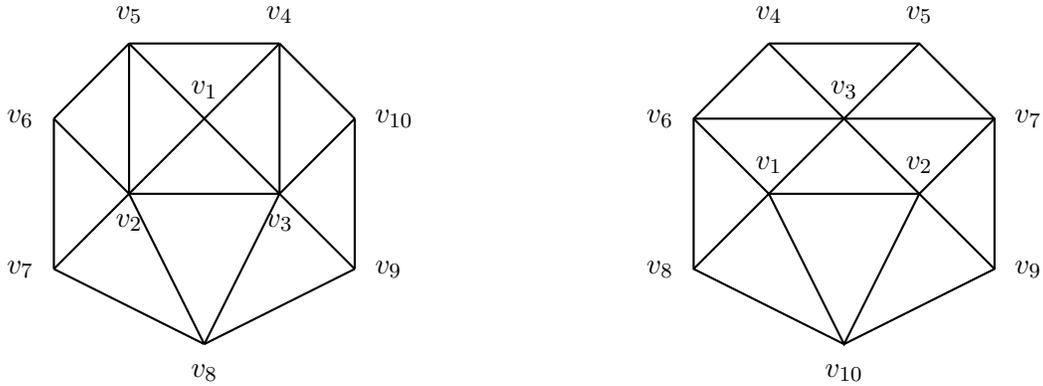
	
	{\bf Case 1:} The vertex $v_1$ is of degree 4. Since $\lk({v_i})\cap\lk({v_j})=\lk(\{v_i, v_j\})$ consists of two distinct vertices for any distinct $i,j\in\{1,2,3\}$, by the inclusion-exclusion principle and flagness of $\Delta$ we have that
	\[f_0(\Delta)\geq f_0(\cup_{i=1}^3 \lk(v_i))= \sum_{i=1}^3f_0(\lk(v_i))- \sum_{1\leq i<j\leq 3}f_0(\lk(v_i)\cap \lk(v_j)) = \sum_{i=1}^3f_0(\lk(v_i))-6\geq 10.\]
	In particular, if $f_0(\Delta)=10$, then it follows that $(\deg v_1, \deg v_2, \deg v_3)=(4, 6, 6)$. Assume that the links of $v_1,v_2, v_3$ are as shown in the left of Figure \ref{fig: case 1,2}. Since $\deg v_1=4$, by Lemma \ref{lm: no adjacent 4-cycle, 5-cycles} $\lk(v_4)$ is not a 4-cycle and hence $\{v_5,v_{10}\}\notin \Delta$. Since $\lk(v_2)$ is flag, $\{v_5, v_3\}$, $\{v_5,v_7\}$ and $\{v_5,v_8\}$ are not edges of $\Delta$. Hence $v_5$ can have at most degree 5 in $\Delta$. However by Lemma \ref{lm: no adjacent 4-cycle links}, $\deg v_5 \geq 6$, a contradiction. Therefore, $f_0(\Delta)\geq 11$.
	
	{\bf Case 2:} Every vertex in $\Delta$ is of degree at least 5. It follows from the Euler Characteristic formula that $f(\Delta)= (1, f_0(\Delta), 3(f_0(\Delta)-\chi(\Delta)), 2(f_0(\Delta)-\chi(\Delta)))$; here $\chi(\Delta) \leq 1$ as $M\neq \Sp^2$. If all vertices of $\Delta$ are of degree 5, by double counting we have
	\[6(f_0(\Delta)-\chi(\Delta))=2f_1(\Delta)=\sum_{v\in \Delta} f_0(\lk(v))=5f_0(\Delta).\]That is, $f_0(\Delta)=6\chi(\Delta)\leq 6$, a contradiction.
	
	Without loss of generality, assume  $(\deg v_1, \deg v_2, \deg v_3)=(\geq5,\geq 5,\geq 6)$. As before, we have $f_0(\Delta)\geq f_0(\cup_{i=1}^3 \lk(v_i))\geq 5+5+6-6= 10$. Suppose $f_0(\Delta)=10$, in which case $(\deg v_1, \deg v_2, \deg v_3)=(5,5,6)$ and the links of $v_i$ are as shown in the right of Figure \ref{fig: case 1,2}. 
	By Lemma \ref{lm: no adjacent 4-cycle, 5-cycles}, $\lk(v_{10})$ and $\lk(v_6)$ are not 4-cycles and hence $\{v_8,v_9\}, \{v_8,v_4\}\notin \Delta$. Since $\deg(v_8)\geq 5$, we must have $\{v_8,v_5\},\{v_8,v_7\}\in \Delta$ and $\lk(v_8)$ is equal to $(v_6, v_1, v_{10}, v_5, v_7)$ or $(v_6, v_1, v_{10}, v_7, v_5)$. However, both $\lk(v_2)$ and $\lk(v_3)$ are flag and $\{v_7, v_{10}\}$, $\{v_6, v_7\}$ cannot be edges of $\Delta$. This leads to a contradiction.
\end{proof}

In Section \ref{Section 3} we showed the existence of an 11-vertex flag triangulation of $\R P^2$. We immediately obtain the following:
\begin{theorem}\label{thm: minimal RP2}
	A minimal flag triangulation of $\R P^2$ has 11 vertices.
\end{theorem}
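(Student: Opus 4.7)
The plan is to combine the lower bound from the preceding theorem with the explicit construction presented in Section \ref{Section 3}. Since $\mathbb{R}P^2$ is a surface distinct from $\Sp^2$, the preceding theorem applies and gives $f_0(\Delta) \geq 11$ for any minimal flag triangulation $\Delta$ of $\mathbb{R}P^2$. Matching this bound, the complex whose facets are listed in Table \ref{table: RP^2} (depicted in Figure \ref{figure: RP^2}) is an 11-vertex flag triangulation of $\mathbb{R}P^2$, so the minimum is exactly $11$.

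To make the argument airtight, I would briefly note the two things that need verification in the example: first, that the complex of Table \ref{table: RP^2} is a triangulated surface homeomorphic to $\mathbb{R}P^2$; second, that it is flag. The first can be checked by confirming that every vertex link is a cycle (so the complex is a closed surface) and then computing the Euler characteristic from the $f$-vector via $\chi = f_0 - f_1 + f_2$, obtaining $\chi = 1$, which by Theorem \ref{thm: classification} forces the surface to be $\mathbb{R}P^2$. The second is an automatic consequence of how the example was produced in Section \ref{Section 3}: edge subdivisions preserve flagness and admissible edge contractions (those not lying in an induced 4-cycle) also preserve flagness, by the Lutz--Nevo theorem (Theorem \ref{thm: Nevo-Lutz}) and \cite[Corollary 6.2]{Nevo-Lutz}. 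Alternatively, one can directly inspect the edge graph to confirm that every 3-clique of $G(\Delta)$ is a listed facet.

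No step poses a genuine obstacle, since the heavy lifting (ruling out $f_0 \leq 10$) is already completed in the preceding theorem via Lemmas \ref{lm: every edge in a 4-cycle}, \ref{lm: no adjacent 4-cycle links}, and \ref{lm: no adjacent 4-cycle, 5-cycles}. The only mild care needed is to ensure the construction in Table \ref{table: RP^2} is checked once, either by hand or by computer, so that the existence half of the equality $f_0 = 11$ is fully justified. Thus the proof reduces to a single concluding sentence combining the lower bound and the explicit upper bound realization.
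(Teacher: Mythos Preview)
Your proposal is correct and matches the paper's approach exactly: the paper simply notes that the existence of an 11-vertex flag triangulation of $\mathbb{R}P^2$ from Section~\ref{Section 3}, combined with the lower bound $f_0(\Delta)\geq 11$ from the preceding theorem, immediately yields the result. Your additional remarks about verifying flagness and the homeomorphism type of the example are more detailed than what the paper spells out, but entirely in the same spirit.
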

\subsection{The minimal flag triangulation of $\Sp^1\times \Sp^1$}
In what follows, we show that the minimal flag triangulation of $\Sp^1\times \Sp^1$ has 12 vertices and furthermore, it is unique.  
\begin{lemma}\label{lm: no 7-cycle link}
	Let $\Delta$ be a minimal flag triangulation of $\Sp^1\times \Sp^1$. Then $f_0(\Delta)\leq 12$ and every vertex of $\Delta$ is of degree $\leq 6$.
\end{lemma}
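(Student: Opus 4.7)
The bound $f_0(\Delta) \le 12$ is immediate: Table~\ref{table: torus} of Section~\ref{Section 3} presents a flag triangulation of $\Sp^1 \times \Sp^1$ on 12 vertices, so any vertex-minimal flag triangulation has $f_0 \le 12$.

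For the degree bound, I would argue by contradiction. Suppose $\deg v = d \ge 7$ for some $v \in V(\Delta)$. Since $\chi(\Sp^1 \times \Sp^1)=0$, the $f$-vector formula gives $f_1(\Delta) = 3 f_0(\Delta)$, and hence $\sum_u \deg u = 6 f_0 \le 72$. Let $\lk(v) = (w_1, \dots, w_d)$, which is an induced $d$-cycle in $\Delta$ (by Lemma~\ref{lm: flag prop}(2) and $d \ge 4$), and set $T := V(\Delta) \setminus \st(v)$; then $|T| = f_0 - 1 - d \le 11 - d \le 4$. Each cycle edge $\{w_i, w_{i+1}\}$ lies in exactly two triangles of $\Delta$: the interior triangle $\{v, w_i, w_{i+1}\}$ and an exterior triangle $\{u_i, w_i, w_{i+1}\}$, where flagness and the fact that $\lk(v)$ has no chord (for $d \ge 4$) force $u_i \in T$.

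Applying pigeonhole to $i \mapsto u_i$ (with $d\ge 7$ edges mapped into $|T|\le 4$ targets), some $u \in T$ serves as the opposite vertex for at least two distinct edges. I would then split into two sub-cases. In the \emph{consecutive} case $u_i = u_{i+1} = u$, the vertex $u$ is adjacent to $w_i, w_{i+1}, w_{i+2}$, so $\lk(w_{i+1})$ contains the $4$-cycle $(w_i, v, w_{i+2}, u)$; if $\deg w_{i+1} = 4$, Lemma~\ref{lm: no adjacent 4-cycle links} forces $\Delta$ to be the octahedral sphere, contradicting $\Delta \cong \Sp^1 \times \Sp^1$, while if $\deg w_{i+1}\ge 5$ one closes the case by combining Lemma~\ref{lm: no adjacent 4-cycle, 5-cycles} with the local identity $\deg w_i = 3 + a_{w_i}$ (where $a_{w_i} \ge 1$ counts $T$-neighbors of $w_i$) and the global degree identity $\sum_{t \in T} \deg t = 6 f_0 - d - \sum_i \deg w_i$. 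In the \emph{non-consecutive} case $u_i = u_j$ with $|i-j|\ge 2$, the cycle $\lk(u)$ must connect the disjoint cycle-edges $\{w_i, w_{i+1}\}$ and $\{w_j, w_{j+1}\}$ by two vertex-disjoint arcs through $T \setminus \{u\}$, forcing extra elements of $T$ and colliding with $|T| \le 4$ once the partition constraint $\sum_{u \in T} |\phi^{-1}(u)| = d \ge 7$ is imposed.

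The main obstacle is orchestrating the case analysis cleanly: the consecutive and non-consecutive scenarios can coexist for different $u \in T$, and one must simultaneously track the exterior triangles $\{u_i, w_i, w_{i+1}\}$ and the links $\lk(u)$, invoking Lemmas~\ref{lm: no adjacent 4-cycle links} and~\ref{lm: no adjacent 4-cycle, 5-cycles} as local degree constraints. I expect the cleanest organization is to dispose of $d \ge 8$ first (where $|T| \le 3$ pinches the problem sharply) and then dispatch $d = 7$ by enumerating the few partitions of $7$ into at most four non-negative parts whose non-zero parts fit the independent-edge constraint on $C_7$ (where any independent edge-set has size at most $3$).
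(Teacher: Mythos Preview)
Your approach is combinatorial and far heavier than the paper's. The paper dispatches the degree bound in three lines via Lemma~\ref{lm: flag prop}(3): if $\deg v \ge 7$, set $W = V(\st(v))$ and $W^c = V(\Delta)\setminus W$, so $|W^c|\le 12-8=4$. Then $\|\Delta\|\setminus\|\st(v)\|$ deformation retracts onto $\|\Delta[W^c]\|$; but a flag complex on at most four vertices has $\beta_1\le 1$ (the only one with a cycle at all is the $4$-cycle), whereas the torus minus a point has $\beta_1=2$. No pigeonhole, no exterior-triangle bookkeeping, and no appeal to Lemmas~\ref{lm: no adjacent 4-cycle links} or~\ref{lm: no adjacent 4-cycle, 5-cycles} is needed.

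Your outline also has concrete gaps. In the consecutive sub-case you invoke Lemma~\ref{lm: no adjacent 4-cycle links} from the single hypothesis $\deg w_{i+1}=4$, but that lemma requires an \emph{edge} both of whose endpoints have degree~$4$; here the neighbour $v$ has degree $\ge 7$, and you have not controlled the degrees of $w_i$, $w_{i+2}$, or $u$, so the lemma does not apply as stated. The ``$\deg w_{i+1}\ge 5$'' branch is only gestured at. In the non-consecutive sub-case your key claim---that the two arcs of $\lk(u)$ joining $\{w_i,w_{i+1}\}$ to $\{w_j,w_{j+1}\}$ must pass through $T\setminus\{u\}$---is not true in general: those arcs may run through further vertices $w_k$ of $\lk(v)$, since nothing prevents $u$ from being adjacent to more of the $w_k$'s, so the ``forcing extra elements of $T$'' step does not follow. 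The enumeration you propose at the end might be made to work, but the deformation-retract lemma is already available and gives the result immediately.
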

\begin{proof}
	We know that $f_0(\Delta)\leq 12$, since in Section \ref{Section 3} we found a flag triangulation of $\Sp^1\times \Sp^1$ with 12 vertices. Assume that there is a vertex $v$ of degree $\geq 7$. Let $W=V(\st(v))$ and $W^c=V(\Delta)-W$. By Lemma \ref{lm: flag prop}, $\|\Delta-\Delta[W]\|=\|\Delta-\st(v)\|$ deformation retracts onto $\|\Delta[W^c]\|$. On the other hand, since $|W^c|=|V(\Delta)|-|W|\leq 12-8\leq 4$ and $\Delta[W^c]$ is flag, it follows that either $\Delta[W^c]$ is the 4-cycle or it is contractible. Hence $$2=\beta_1(\|\Delta-v\|)=\beta_1(\|\Delta-\st(v)\|)=\beta_1(\|\Delta[W^c]\|)\leq 1,$$ a contradiction.
\end{proof}
\begin{lemma}
	A minimal flag triangulation $\Delta$ of $\Sp^1\times \Sp^1$ has 12 vertices and each vertex is of degree 6. 
\end{lemma}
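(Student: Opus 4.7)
The plan is to prove the two assertions separately, starting with the degree statement because it is the quicker of the two and is needed for the vertex-count argument. Since $\chi(\Sp^1\times\Sp^1)=0$, the $f$-vector formula for a triangulated surface (stated in Section~\ref{Section 2}) gives $f_1(\Delta)=3f_0(\Delta)$, so double counting yields $\sum_v \deg v = 2f_1(\Delta)=6f_0(\Delta)$: the average vertex degree is exactly $6$. By Lemma~\ref{lm: no 7-cycle link} every vertex has degree at most $6$, so every vertex must have degree exactly $6$, settling the second part.

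For $f_0(\Delta)=12$, Lemma~\ref{lm: no 7-cycle link} already gives the upper bound, so I will obtain the lower bound by contradiction, assuming $f_0(\Delta)\leq 11$. Fix any vertex $v$. Since $\deg v=6$, the set $W:=V(\st(v))$ consists of $v$ together with its six link-neighbors, so $|W|=7$ and hence $|W^c|\leq 4$. By Lemma~\ref{lm: flag prop}(2), $\lk(v)$ is an induced $6$-cycle; any edge between two non-adjacent vertices of $\lk(v)$ would, by flagness, give a triangle inside the $6$-cycle $\lk(v)$, which is impossible. Hence $\Delta[W]$ contains no faces beyond those of $\st(v)$, so $\Delta[W]=\st(v)$, a closed $2$-disk in $\|\Delta\|$.

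The rest of the argument mirrors the end of the proof of Lemma~\ref{lm: no 7-cycle link}. By Lemma~\ref{lm: flag prop}(3), $\|\Delta\|-\|\Delta[W]\|$ deformation retracts onto $\|\Delta[W^c]\|$. The left-hand side is the torus with a closed disk removed, which is homotopy equivalent to $\Sp^1\vee\Sp^1$ and therefore has $\beta_1=2$. The right-hand side is a flag complex on at most $4$ vertices; a short enumeration of flag complexes on $\leq 4$ vertices shows that $\beta_1\leq 1$ always holds, with equality achieved only by the $4$-cycle. This contradicts $\beta_1=2$, so $f_0(\Delta)\geq 12$, and together with the upper bound we get $f_0(\Delta)=12$.

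I do not foresee substantive obstacles: the argument is structurally a reprise of Lemma~\ref{lm: no 7-cycle link}, with the role played there by a hypothetical high-degree vertex now played by the hypothesis $f_0\leq 11$ combined with the fact that every vertex has degree $6$. The only mildly delicate point is the identification $\Delta[W]=\st(v)$, and flagness of $\lk(v)$ together with its being a $6$-cycle dispatches this quickly.
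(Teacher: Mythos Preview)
Your proof is correct. The degree-$6$ argument is identical to the paper's: $\chi=0$ forces $f_1=3f_0$, so the average degree is $6$, and Lemma~\ref{lm: no 7-cycle link} caps every degree at $6$, forcing equality everywhere.

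Where you diverge is in the lower bound $f_0\geq 12$. The paper picks a $2$-face $\{v_1,v_2,v_3\}$ and applies inclusion--exclusion to the three vertex links: since each $\lk(v_i)$ has $6$ vertices and each pairwise intersection $\lk(v_i)\cap\lk(v_j)=\lk(\{v_i,v_j\})$ has $2$, one gets $f_0(\Delta)\geq f_0\!\left(\bigcup_i\lk(v_i)\right)\geq 18-6=12$. You instead rerun the topological argument of Lemma~\ref{lm: no 7-cycle link}: with $f_0\leq 11$ and $\deg v=6$, the complement $W^c$ has at most $4$ vertices, so $\beta_1(\Delta[W^c])\leq 1$, contradicting $\beta_1(\|\Delta\|-\|\st(v)\|)=2$. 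Both arguments are short and valid. Your route is conceptually tidy in that it reuses the same mechanism as the preceding lemma; the paper's route is more elementary (no homology needed) and has the side benefit that the equality $V(\Delta)=V\!\left(\bigcup_i\st(v_i)\right)$ it produces is exactly what the subsequent uniqueness proof needs as its starting point. One minor remark: your justification that $\Delta[W]=\st(v)$ is slightly roundabout---it follows directly from Lemma~\ref{lm: flag prop}(2), which says $\Delta[V(\lk(v))]=\lk(v)$.
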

\begin{proof}
	Since $\chi(\Delta)=0$, by the Euler characteristic, $f(\Delta)=(1, f_0, 3f_0, 2f_0)$. By Lemma \ref{lm: no 7-cycle link},
	\[6f_0(\Delta)=2f_1(\Delta)=\sum_{v\in\Delta}f_0(\lk(v))\leq 6f_0(\Delta).\]
	Hence $f_0(\lk(v))=6$ for every vertex of $\Delta$. If $\{v_1, v_2, v_3\}$ is a 2-face of $\Delta$, then \[12\geq f_0(\Delta)\geq f_0(\cup_{i=1}^3 \lk(v_i))\geq \sum_{i=1}^3f_0(\lk(v_i))-\sum_{1\leq i<j\leq 3}f_0(\lk(\{v_i,v_j\})=3\cdot 6-3\cdot 2=12.\] This proves the claim.
\end{proof}

\begin{theorem}
	The minimal flag triangulation of $\Sp^1\times \Sp^1$ has 12 vertices and is unique.
\end{theorem}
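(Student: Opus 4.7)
The plan is to leverage the fact that existence of a 12-vertex flag triangulation of $\Sp^1\times\Sp^1$ is shown in Section \ref{Section 3} and the preceding lemmas force $f_0(\Delta)=12$ and $\deg(v)=6$ for every vertex, so every vertex link is a flag $1$-sphere on six vertices, i.e., the $6$-cycle. Only uniqueness remains, and I would establish it by forcing all $24$ facets of $\Delta$ starting from an arbitrarily fixed facet.

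Fix a facet $\{v_1,v_2,v_3\}\in\Delta$. The chain of inequalities in the proof of the preceding lemma is now an equality, so $\lk(v_i)\cap\lk(v_j)=\{v_k,w_{ij}\}$, where $w_{ij}$ is the apex of the other facet on the edge $\{v_i,v_j\}$, and the triple intersection is empty (any common vertex $w$ would complete $\{v_1,v_2,v_3,w\}$ to a $K_4$ in $G(\Delta)$, and by flagness to a $3$-simplex, impossible in a surface). Hence
\begin{equation*}
V(\Delta)=\{v_1,v_2,v_3\}\sqcup\{w_{12},w_{13},w_{23}\}\sqcup P_1\sqcup P_2\sqcup P_3,
\end{equation*}
where $P_i:=V(\lk(v_i))\setminus V(\lk(v_j)\cup\lk(v_k))$ has cardinality two. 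The $6$-cycle $\lk(v_i)$ must contain the consecutive arc $w_{ij},v_j,v_k,w_{ik}$ of length three (these four vertices are distinct since the triple intersection is empty), so the complementary arc of length three is filled by $P_i$; up to the internal labeling of each $P_i$, the three links $\lk(v_1),\lk(v_2),\lk(v_3)$ are forced, and they determine the $13$ facets meeting $\{v_1,v_2,v_3\}$.

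Next I would force the remaining $11$ facets, which live only on the nine vertices of $\{w_{12},w_{13},w_{23}\}\cup P_1\cup P_2\cup P_3$, by analyzing the links of these nine vertices in turn. In each such $6$-cycle at least four positions are already pinned down by the facets of the previous paragraph---for example, $\lk(w_{ij})$ must contain the consecutive arc $p,v_i,v_j,q$ with $p\in P_i$ and $q\in P_j$ determined by the cyclic position of $w_{ij}$ inside $\lk(v_i)$ and $\lk(v_j)$---and the two free positions must be filled from the other outer vertices. The flag condition together with the already-determined links rules out every alternative: any disallowed chord would produce an induced $3$-cycle, while any other mis-assignment would oversaturate some previously determined link. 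Iterating yields the same facet list as Table~\ref{table: torus} (after relabeling), giving uniqueness.

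The main obstacle is the case analysis in the final step: nine links to process, each in principle admitting several completions before flagness is imposed. To keep the bookkeeping manageable I would fix the labelings of $P_1,P_2,P_3$ using the symmetries of the partial complex already built (consistent with the $S_4\times S_3$ symmetry of the final answer noted in the remark at the end of Section \ref{Section 3}), sharply reducing the number of branches.
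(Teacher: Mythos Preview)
Your proposal is correct and follows essentially the same approach as the paper: fix a facet $\{v_1,v_2,v_3\}$, use the three $6$-cycle links to account for all $12$ vertices and the facets meeting $\{v_1,v_2,v_3\}$, then force the remaining edges via flagness and the degree-$6$ constraint. The paper's execution of the last step is a bit tighter than the nine-link sweep you describe---it first uses the induced-link condition to forbid enough edges that degree~$6$ forces six new edges outright, then pins down the final six edges with a single ``no $K_4$ in $G(\Delta)$'' argument---but the underlying strategy is identical.
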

\begin{proof}
	Let $\{v_1, v_2, v_3\}$ be a 2-face of $\Delta$. By the above lemma, $|V(\cup_{i=1}^3 \st(v_i))|=|V(\Delta)|=12$. Furthermore, $G(\Delta)$ contains the subgraph $G(\cup_{i=1}^3 \st(v_i))$, as shown in the blue part of Figure \ref{Figure: S1S1}. 
		
	Since $\lk(v_i)=\Delta[V(\lk(v_i))]$, by flagness $v_5$ is not connected to $v_7, v_8, v_3, v_{11}, v_{12}$. On the other hand, $\deg(v_5)=6$. Hence $v_5$ must be connected to $v_9, v_{10}$. Applying the same argument to $v_8, v_{11}$, we have $\{v_8, v_4\}, \{v_8, v_{12}\}, \{v_{11}, v_6\}, \{v_{11}, v_7\}\in \Delta$.
	
	It is left to decide the remaining 6 edges. Since $v_4$ is of degree 6, $v_4$ is connected to two vertices among $v_7, v_9, v_{10}$. However, if both $\{v_4, v_9\}$ and $\{v_4, v_{10}\}$ are edges of $\Delta$, $\{v_4, v_5, v_9, v_{10}\}$ will form a clique in $G(\Delta)$ and by flagness of $\Delta$ form a 3-face of $\Delta$, which is not possible. Hence $\{v_4, v_7\}\in \Delta$. Similarly, we apply the same argument on the vertices $v_7, v_{10}, v_6, v_9, v_{12}$ respectively. This shows that $\{v_7, v_{10}\}$,  $\{v_{10}, v_4\}$, $\{v_6, v_9\}$, $\{v_9, v_{12}\}$ and $\{v_{12}, v_6\}$ are edges of $\Delta$. This gives all 36 edges in $\Delta$ and the graph is isomorphic to the 12-vertex construction in Section \ref{Section 3}.
\end{proof}
\begin{figure}[h]
	\centering
	\includegraphics[scale=0.8]{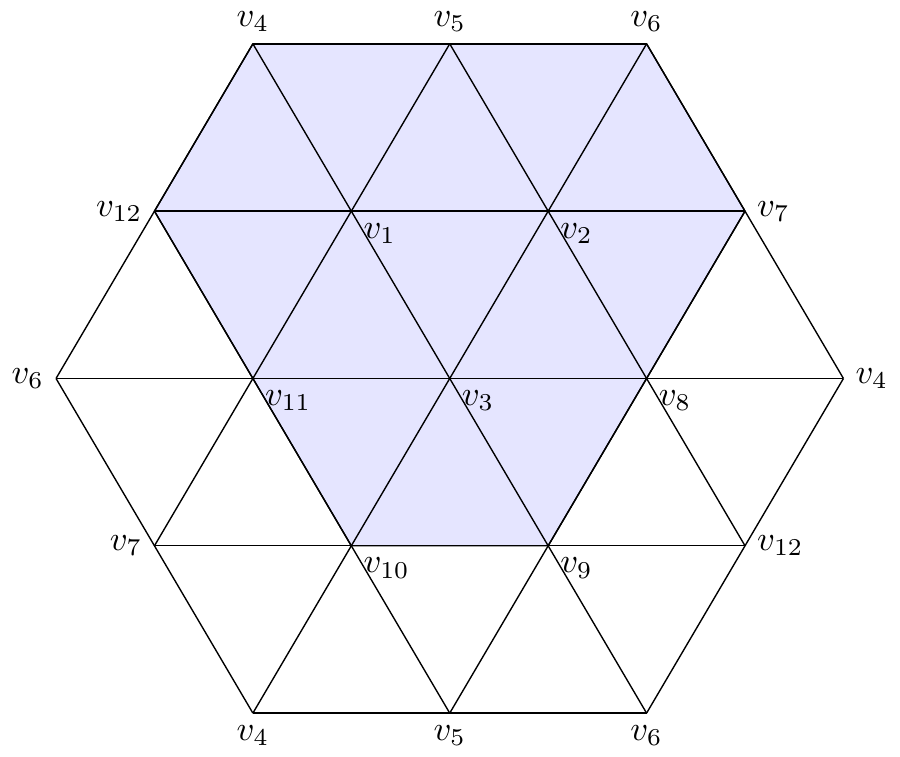}
	\caption{The triangulation of $\Sp^1\times \Sp^1$ obtained by identifying three pairs of (oriented) opposite sides in the triangulated hexagon.}
	\label{Figure: S1S1}
\end{figure}

\subsection{Generating small flag triangulations of all surfaces}
By the classification theorem of surfaces (see Theorem \ref{thm: classification}), a surface is either a 2-sphere, or the connected sum of $\Sp^1\times \Sp^1$, or the connected sum of $\R P^2$. Hence to construct a small flag triangulation of all surfaces, it suffices to show how to take the connected sum of surfaces efficiently while preserving flagness.

\begin{definition}\label{def: flag connected sum}
	Let $\Delta_1$ and $\Delta_2$ be the flag triangulations of $(d-1)$-manifolds $M_1$ and $M_2$, respectively. Let $W_1\subset V(\Delta_1)$ and $W_2\subset V(\Delta_2)$. Assume that $\Delta_1[W_1]$ and $\Delta_2[W_2]$ are simplicial $(d-1)$-balls and that  $\partial \Delta_1[W_1]$ and $\partial \Delta_2[W_2]$ are flag simplicial $(d-2)$-spheres; furthermore, assume that $\phi: \partial \Delta_1[W_1]\to \partial \Delta_2[W_2]$ is a simplicial isomorphism. Define a simplicial complex $\Delta=\Delta_1\#_\phi\Delta_2$ by 1) removing the interior faces of $\Delta_1[W_1]$ and $\Delta_2[W_2]$ and 2) gluing $\Delta_1$ and $\Delta_2$ by identifying $\partial (\Delta_1[W_1])$ with $\phi(\partial (\Delta_1[W_1]))=\partial (\Delta_2[W_2])$. We say $\Delta$ is a \emph{flag connected sum} of $\Delta_1$ and $\Delta_2$ under $\phi$.
\end{definition}
We remark that $\Delta_1[W_1]$ and $\Delta_2[W_2]$ are not necessarily
isomorphic in Definition \ref{def: flag connected sum}. The following lemma
shows that Definition \ref{def: flag connected sum} generates a new flag complex.
\begin{lemma}\label{lm: flag connected sum}
	If $\Delta_1$ and $\Delta_2$ are flag triangulations of $(d-1)$-manifolds $M_1$ and $M_2$, respectively, then the flag connected sum $\Delta=\Delta_1\#_\phi\Delta_2$ is a flag triangulation of $M_1\#M_2$. 
\end{lemma}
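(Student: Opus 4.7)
The plan is to verify two statements separately: (i) the geometric realization $\|\Delta\|$ is homeomorphic to $M_1\# M_2$, and (ii) $\Delta$ is flag. Part (i) is largely bookkeeping, while (ii) is the genuine content and uses the flag-boundary hypothesis essentially.

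For (i), removing the interior faces of the triangulated $(d-1)$-ball $\Delta_i[W_i]$ from $\Delta_i$ produces a triangulation of the manifold-with-boundary $M_i\setminus\intr\|\Delta_i[W_i]\|$, whose boundary realizes the topological $(d-2)$-sphere $\|\partial\Delta_i[W_i]\|$. Since $\phi$ is a simplicial isomorphism it induces a PL homeomorphism of these boundary spheres, so the gluing in Definition~\ref{def: flag connected sum} is exactly the standard construction of the topological connected sum; thus $\|\Delta\|\cong M_1\# M_2$.

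For (ii), I first fix notation: let $V_i^\circ$ denote the interior vertices of $\Delta_i[W_i]$, and identify $V(\partial\Delta_1[W_1])$ with $V(\partial\Delta_2[W_2])$ under $\phi$, calling the common set $V(B)$. Then $V(\Delta)=(V(\Delta_1)\setminus V_1^\circ)\cup_{V(B)}(V(\Delta_2)\setminus V_2^\circ)$. Suppose $\sigma\subseteq V(\Delta)$ satisfies $|\sigma|\geq 3$ and every pair of vertices of $\sigma$ is an edge of $\Delta$; the goal is to show $\sigma\in\Delta$. No edge of $\Delta$ connects $V(\Delta_1)\setminus W_1$ with $V(\Delta_2)\setminus W_2$, so $\sigma$ lies entirely on one side, say $\sigma\subseteq V(\Delta_1)\setminus V_1^\circ$. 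A brief edge-by-edge check (splitting according to whether endpoints lie in $W_1$, and recording that an edge of $\Delta$ between two vertices of $V(B)$ must come from $\partial\Delta_1[W_1]$, since interior edges of the ball are deleted and $\phi$ only reinstates boundary edges) shows that every edge of $\sigma$ is an edge of $\Delta_1$. Flagness of $\Delta_1$ then forces $\sigma\in\Delta_1$. If $\sigma$ is not interior to $\Delta_1[W_1]$ then $\sigma\in\Delta$ and we are done; otherwise $\sigma\subseteq W_1$, which combined with $\sigma\subseteq V(\Delta_1)\setminus V_1^\circ$ gives $\sigma\subseteq V(B)$. All edges of $\sigma$ then lie in $\partial\Delta_1[W_1]$; by flagness of $\partial\Delta_1[W_1]$, $\sigma$ is a face of it, contradicting interiority.

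The main obstacle is precisely this last step: ruling out that $\sigma$ is an interior face of the ball $\Delta_1[W_1]$ that happens to have all its vertices on the boundary. Without the flag-boundary hypothesis, such a clique on $V(B)$ need not lie in $\partial\Delta_1[W_1]$, so the argument would break down; the assumed flagness of $\partial\Delta_1[W_1]$ (equivalently $\partial\Delta_2[W_2]$) is exactly what closes the case. I would carefully verify the auxiliary claim that edges of $\Delta$ between two boundary vertices come precisely from $\partial\Delta_1[W_1]$ (both directions) as a preliminary observation before running the case analysis.
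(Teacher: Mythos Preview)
Your proof is correct and follows essentially the same route as the paper's: both argue that a putative large missing face must live entirely on one side (say $\Delta_1$), invoke flagness of $\Delta_1$ to conclude it is a face there, observe it must therefore be an interior face of $\Delta_1[W_1]$, and then use flagness of $\partial\Delta_1[W_1]$ to reach a contradiction. Your version is more careful about bookkeeping---explicitly tracking interior vertices $V_i^\circ$ and verifying edge-by-edge that each edge of $\sigma$ actually lies in $\Delta_1$ (in particular the case of two boundary vertices)---whereas the paper passes over these points quickly; but the structure and the key uses of the hypotheses are identical.
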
 
\begin{proof}
	First $\Delta$ triangulates $M_1\#M_2$ because $\|\Delta_1\|=M_1$, $\|\Delta_2\|=M_2$ and $\Delta_1[W_1]$ and $\Delta_2[W_2]$ are simplicial full-dimensional balls. Assume that $\Delta$ is not flag and $F$ is a missing face of $\Delta$ of dimension $\geq 2$. If there are two vertices $v_1, v_2 \in F$ such that $v_i\in \Delta_i$ but $v_i\notin W_i$ for $i=1,2$, then by the construction, $v_1$ is not connected to $v_2$ in $\Delta$, contradicting the fact that $F$ is a missing face of dimension $\geq 2$. So all the vertices of $F$ are either in $\Delta_1$ or $\Delta_2$; without loss of generality assume that they are in $\Delta_1$. By flagness of $\Delta_1$, we have $F\in \Delta_1$. But since $F\notin \Delta$, $F$ must be an interior face of $\Delta_1[W_1]$ which is removed in the flag connected sum. In particular, $F$ is a missing face of $\partial \Delta_1[W_1]$, contradicting the flagness of $\partial \Delta_1[W_1]$.
\end{proof}

The above lemma together with the minimal flag triangulations of $\Sp^1\times \Sp^1$ and $\R P^2$ lead to the following upper bound on the number of vertices of minimal flag triangulated surfaces.
\begin{theorem}\label{thm: small flag triangulations of all surfaces}
	Let $k\geq 1$ be an integer. A minimal flag triangulation of the connected sum of $k$ tori requires at most $8+4k$ vertices, while a minimal flag triangulation of the connected sum of $k$ real projective planes requires at most $8+3k$ vertices.
\end{theorem}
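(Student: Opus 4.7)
The plan is to induct on $k$, iteratively attaching copies of the minimal flag torus (respectively $\R P^2$) to a growing complex via the flag connected sum of Definition~\ref{def: flag connected sum}, using stars of degree-$6$ vertices as the two disks. The base case $k=1$ is delivered by the $12$- and $11$-vertex constructions from Section~\ref{Section 3}, matching $8+4$ and $8+3$ respectively.

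The central observation I would exploit is that if $v$ is a degree-$6$ vertex of a flag $2$-manifold $\Delta$, then $\lk(v)$ is an induced flag $6$-cycle and, by Lemma~\ref{lm: flag prop}(2), the induced subcomplex $\Delta[V(\st(v))]$ coincides with $\st(v)$, which is a simplicial $2$-disk bounded by the flag $6$-cycle $\lk(v)$. Both building blocks satisfy this: every vertex of the minimal flag torus has degree $6$ (shown in the previous subsection), and vertex $1$ of the $11$-vertex $\R P^2$ in Table~\ref{table: RP^2} has link the induced $6$-cycle $(2,6,5,4,8,9)$. Given any two such complexes $\Delta_1, \Delta_2$ with chosen degree-$6$ vertices $v_i$, a simplicial isomorphism $\phi\colon \lk(v_1)\to \lk(v_2)$ of $6$-cycles always exists, so Lemma~\ref{lm: flag connected sum} produces a flag triangulation of $\|\Delta_1\|\#\|\Delta_2\|$ on $|V(\Delta_1)|+|V(\Delta_2)|-1-1-6$ vertices (discarding the interior vertex of each star and identifying the $6$ boundary vertices). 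Attaching a $12$-vertex torus therefore adds $4$ vertices per step, giving $12+4(k-1)=8+4k$ after $k$ tori; attaching an $11$-vertex $\R P^2$ adds $3$ vertices per step, giving $8+3k$ after $k$ projective planes.

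The main obstacle — really just a bookkeeping check — is verifying that the induction can continue, i.e., that the enlarged complex still contains a degree-$6$ vertex whose star is available for the next gluing. This should be immediate: the freshly attached summand contributes $12-7=5$ vertices (torus case) or $11-7=4$ vertices ($\R P^2$ case) lying outside $V(\st(v_2))$, and each such vertex keeps its entire original link in the glued complex, since it is separated from the other side by the identified boundary cycle. In the torus case all such leftover vertices have degree $6$ automatically; for $\R P^2$ one checks from Table~\ref{table: RP^2} that among the four vertices $\{3,7,10,11\}$ outside $V(\st(1))=\{1,2,4,5,6,8,9\}$, vertices $7$, $10$, and $11$ all have degree $6$, so a valid hook for the next step always survives. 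Flagness and the topological type of the output at each stage are delivered directly by Lemma~\ref{lm: flag connected sum}, so beyond the vertex-count arithmetic and this persistence check no further work is required.
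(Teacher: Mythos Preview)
Your proposal is correct and follows essentially the same approach as the paper: induct on $k$, attaching a fresh copy of the $12$-vertex flag torus (respectively $11$-vertex flag $\R P^2$) via the flag connected sum along stars of degree-$6$ vertices, and check that a degree-$6$ vertex survives outside the glued star to serve as the hook for the next step. The only cosmetic differences are that the paper glues along vertex~$11$ of the $\R P^2$ and keeps vertex~$1$ as the surviving hook (you do the reverse), and the paper phrases the vertex count as $f_0(\Delta_1)+f_0(\Delta_2)-f_0(\st(v_1))-1$ rather than $|V(\Delta_1)|+|V(\Delta_2)|-1-1-6$, but these are equivalent.
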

\begin{proof}
The proof is by induction on $k$. In the case of connected sum of tori, the base case is verified by the 12-vertex flag triangulation (whose vertex links are all 6-cycles) that we found in Section \ref{Section 3}. Inductively, assume that there is a flag triangulation $\Delta_1$ of the connected sum of $k-1$ tori with $4+4k$ vertices and furthermore, that there is a vertex $v_1$ of degree 6 in $\Delta_1$. Let $\Delta_2$ be the 12-vertex flag triangulation of $\Sp^1\times \Sp^1$. By the construction, there is a vertex $v_2$ of degree 6 in $\Delta_2$. We let $W_i=V(\st(v_i, \Delta_i))$ for $i=1,2$. Then $\Delta_1[W_1]=\st(v_1,\Delta_1)$ and $\Delta_2[W_2]=\st(v_2, \Delta_2)$ are indeed simplicial 2-balls whose boundaries are both 6-cycles. Since the vertex stars and vertex links are induced, by Lemma \ref{lm: flag connected sum} the flag connected sum $\Delta=\Delta_1\#\Delta_2$ is well-defined and triangulates the connected sum of $k$ tori. Furthermore,$$
	f_0(\Delta)=f_0(\Delta_1)+f_0(\Delta_2)-f_0(\st(v_1, \Delta_1))-1=(4+4k)+12-7-1=8+4k.$$ Finally, to make the inductive construction work, we need to check that there is a vertex $w$ of degree 6 in the new complex $\Delta$. Indeed, we can take any vertex $w$ from $\Delta_2$ such that $w\notin \st(v_2, \Delta_2)$. Any such $w$ has $\st(w,\Delta_2)=\st(w,\Delta)$ and so it is of degree 6 in $\Delta$.
	
	The proof for connected sum of $\R P^2$ is similar. We begin with an 11-vertex flag triangulation of $\R P^2$ as in Figure \ref{figure: RP^2} and note that it has two disjoint vertices of degree 6 (for example, the vertices 1 and 11). Inductively we take the flag connected sum of a $(5+3k)$-vertex flag triangulation $\Gamma_1$ of the connected sum of $k-1$ copies of $\R P^2$ found by induction with an 11-vertex flag triangulation $\Gamma_2$ of $\R P^2$. This is done by choosing a vertex $u$ of degree 6 in $\Gamma_1$ (whose existence is by induction), removing all interior faces of $\st(u, \Gamma_1)$ and $\st(11, \Gamma_2)$, and identifying $\lk(u, \Gamma_1)$ with $\lk(11, \Gamma_2)$. The vertex 1 from $\Gamma_2$ is also a vertex of degree 6 in $\Gamma_1\#\Gamma_2$. Finally, $$f_0(\Gamma_1\#\Gamma_2)=f_0(\Gamma_1)+f_0(\Gamma_2)-f_0(\st(u,\Gamma_1))-1=(5+3k)+11-7-1=8+3k.$$
\end{proof}

\begin{remark}
	As shown in Section \ref{Section 3}, the computer search found 28 combinatorially distinct 14-vertex flag triangulations of the Klein bottle. Indeed many of these 14-vertex triangulations can be obtained by taking the flag connected sum of two 11-vertex flag triangulation of $\R P^2$, as suggested in the proof of Theorem \ref{thm: small flag triangulations of all surfaces}.
\end{remark}
\section{Towards a manifold Charney-Davis conjecture}\label{Section 5}
As mentioned in Section \ref{Section 1}, while the lower bound theorem for flag 3-spheres is established by Davis and Okun, so far there is no analogous lower bound conjecture for flag 3-manifolds. To motivate why such a conjecture might exist, let us recall the classical lower bound theorems for manifolds and balanced manifolds. A simplicial $(d-1)$-sphere is \emph{stacked} if it is the connected sum of the boundaries of $d$-simplices. The \emph{Walkup class} $\mathcal H^d$ ($d\geq 3$) is defined recursively as follows: 1) $\mathcal{H}^d(0)$ is the set of all stacked $(d-1)$-spheres, that is, the spheres obtained by successively taking the connected sum of the boundary complexes of the simplices, 2) a simplicial complex $\Delta$ is in $\mathcal{H}^d(k + 1)$ if it is obtained from a member of $\mathcal{H}^d(k)$ by a
handle addition, and 3) $\mathcal{H}^d=\cup_{k\geq 0}\mathcal{H}^d(k)$. 
\begin{theorem}[\cite{Datta-Murai},\cite{Murai},\cite{Novik-Swartz}]\label{LBT-manifolds}
	For any connected simplicial $(d-1)$-manifold without boundary $\Delta$ and $d\geq 4$, $$g_2(\Delta):=f_1(\Delta)-df_0(\Delta)+\binom{d+1}{2}\geq \binom{d+1}{2}\beta_1(\Delta).$$ Furthermore, the equality is attained if and only if $\Delta$ is in the Walkup class.
\end{theorem}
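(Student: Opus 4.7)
The plan is to port the algebraic proof of the Lower Bound Theorem for simplicial spheres into the manifold setting, absorbing the first Betti number through a modified $h$-vector. Work over an infinite field $\field$ of characteristic $0$ (or match $\field$ to the coefficient field of the Betti number). Attach to $\Delta$ its Stanley-Reisner ring $\field[\Delta]$, choose a generic linear system of parameters $\theta_1,\dots,\theta_d$, and form the Artinian reduction $A(\Delta)=\field[\Delta]/(\theta_1,\dots,\theta_d)$. A direct computation gives $g_2(\Delta)=\dim A(\Delta)_2-\dim A(\Delta)_1$, so the target inequality becomes a statement about Hilbert functions of $A(\Delta)$.

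The first ingredient is rigidity: by Fogelsanger's theorem, the $1$-skeleton of any connected simplicial $(d-1)$-manifold with $d\geq 4$ is generically $d$-rigid. Translated algebraically, this says that for a generic linear form $\omega$, multiplication $\omega:A(\Delta)_1\to A(\Delta)_2$ is injective modulo the degree-$2$ part of the socle. The second ingredient is Schenzel's formula for the Hilbert function of $A(\Delta)$ when $\Delta$ is Buchsbaum, which shows that the socle in degree $i$ is controlled by the Betti numbers $\beta_j(\Delta;\field)$ for $j<i$; in degree $2$ its dimension picks up a term proportional to $\binom{d}{1}\beta_1(\Delta)$. Combining these two inputs and passing to a modified vector $h''$ whose components are non-negative, one obtains the GLBT-type inequality $h_2''\geq h_1''$. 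Expanding the definitions yields exactly
\[g_2(\Delta)\;\geq\;\binom{d+1}{2}\beta_1(\Delta).\]

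For the equality half, the argument goes by induction on $\beta_1(\Delta)$. When $\beta_1=0$, the manifold is a $\field$-homology sphere and the equality $g_2=0$ reduces to the classical Lower Bound Theorem of Kalai (proved via the same rigidity framework), forcing $\Delta$ to be a stacked sphere, i.e., $\Delta\in\mathcal H^d(0)$. For $\beta_1\geq 1$, tightness of the displayed inequality implies that the socle analysis saturates, and one uses this to produce two disjoint missing facets of $\Delta$ whose boundaries admit a simplicial isomorphism. Reversing a handle addition at this pair yields a simplicial $(d-1)$-manifold $\Delta'$ with $\beta_1(\Delta')=\beta_1(\Delta)-1$ that still achieves equality, and induction places $\Delta'$, and thus $\Delta$, in the Walkup class $\mathcal H^d$.

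The main obstacle is the equality characterization: one must extract a purely combinatorial handle from an algebraic tightness condition, which requires a careful tracking of how the stress space interacts with the socle of $A(\Delta)$ and an argument that the isomorphism on boundary spheres can be realized within $\Delta$. This is the technical heart of Murai's contribution to the cited references and is substantially more delicate than the inequality itself, which follows rather directly from Fogelsanger rigidity together with Schenzel's formula.
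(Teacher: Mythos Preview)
The paper does not prove this theorem. It is stated as background, with citations to \cite{Datta-Murai}, \cite{Murai}, and \cite{Novik-Swartz}, and is followed only by a short historical remark about Barnette and Kalai; no argument is given. So there is no ``paper's own proof'' to compare your proposal against.

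That said, your sketch is broadly faithful to the approach in the cited references: the inequality comes from combining Fogelsanger's generic rigidity of the $1$-skeleton with the Novik--Swartz socle analysis (via Schenzel's formula) to obtain an $h''$-type inequality, and the equality characterization is handled by the handle-reversal induction on $\beta_1$, reducing to Kalai's stacked-sphere case when $\beta_1=0$. Two cautions. First, your line ``$\omega:A(\Delta)_1\to A(\Delta)_2$ is injective modulo the degree-$2$ part of the socle'' conflates two separate inputs: rigidity gives outright injectivity of $\omega$ in this range, while the $\binom{d+1}{2}\beta_1$ gain comes from the Novik--Swartz result that the socle of $A(\Delta)$ contributes an independent $\binom{d}{i}\beta_{i-1}$-dimensional piece in each degree, which one must combine with injectivity more carefully than you indicate. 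Second, the equality step---producing an actual missing-facet pair and a compatible identification from algebraic tightness---is genuinely delicate, and your proposal acknowledges but does not supply this; that is where the Murai and Datta--Murai arguments do real work. As written, your proposal is a reasonable roadmap of the literature rather than a self-contained proof, which is also all the present paper offers for this statement.
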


We remark that a weaker inequality $g_2\geq 0$ was first proved in the class of simplicial polytopes \cite{Barnette} and in the class of simplicial manifolds \cite{Barnette2}. Using rigidity
theory of frameworks, Kalai \cite{Kalai} found an alternative proof of the inequality and also characterized all simplicial $(d-1)$-manifolds with $g_2=0$ assuming $d \geq 4$. In recent years, lower bound results were also established for balanced simplicial polytopes and manifolds. A $(d-1)$-dimensional simplicial complex $\Delta$ is called \emph{balanced} if there is a coloring map $\kappa: V(\Delta)\to [d]$ such that if $\{a, b\}\in \Delta$, then $\kappa(a)\neq \kappa(b)$. The balanced Walkup class, $\mathcal{BH}^d$, consists of balanced $(d-1)$-dimensional complexes that are obtained from the boundary complexes of $d$-cross-polytopes by successively applying the operations of balanced connected sums and balanced handle additions. 
\begin{theorem}[\cite{Klee-Novik}, \cite{BLBT}]\label{BLBT-manifolds}
	For any connected balanced simplicial $(d-1)$-manifold without boundary $\Delta$ and $d\geq 4$,  $$\bar{g}_2(\Delta):=2f_1(\Delta)-3(d-1)f_0(\Delta)+2d(d-1)\geq  4\binom{d}{2}\beta_1(\Delta).$$ Furthermore, for $d\geq 5$ the equality holds if and only if $\Delta$ is in the balanced Walkup class.
\end{theorem}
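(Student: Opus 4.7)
My plan is to follow the algebraic--topological framework that Novik--Swartz used for Theorem \ref{LBT-manifolds}, adapted to the colored structure of a balanced complex. Let $\kappa : V(\Delta) \to [d]$ be the balancing, and form the Artinian reduction $A(\Delta) := \field[\Delta]/(\theta_1,\ldots,\theta_d)$, where each $\theta_i$ is a generic $\field$-linear combination of the vertex variables of color $i$. Schenzel's formula identifies $\dim_\field A(\Delta)_j$ with $h''_j(\Delta)$, whose low-degree values mix face numbers with Betti numbers. A direct translation, using the Dehn--Sommerville relations for Eulerian balanced complexes, turns the target inequality $\bar g_2(\Delta) \geq 4\binom{d}{2}\beta_1(\Delta)$ into the purely algebraic statement $\dim A(\Delta)_2 \geq \dim A(\Delta)_1$.

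The core step is then to exhibit an injective multiplication map $A(\Delta)_1 \xrightarrow{\,\cdot\,\omega\,} A(\Delta)_2$ for a suitable color-respecting linear form $\omega$. In the unbalanced case this is achieved by Kalai's generic rigidity of the 1-skeleton, or equivalently by a weak Lefschetz property in degree one; for balanced complexes I would use a colored analog, placing the vertices generically on the $d$ coordinate axes of $\R^d$ and leveraging a balanced generic rigidity statement for the 1-skeleton that parallels the one Klee--Novik established for balanced spheres. The correction term $4\binom{d}{2}\beta_1$ then emerges naturally from Schenzel's formula and the identification of the rational cohomology of $\Delta$ with the socle of $A(\Delta)$.

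For the equality characterization when $d \geq 5$, if $\bar g_2(\Delta) = 4\binom{d}{2}\beta_1(\Delta)$, then $\cdot\omega$ is an isomorphism $A(\Delta)_1 \to A(\Delta)_2$, which upon passing to vertex links forces $\bar g_2(\lk(v,\Delta)) = 0$ for every vertex $v$. By induction on $d$ applied to balanced $(d-2)$-spheres, each link lies in $\BH^{d-1}$. A balanced analog of the Bagchi--Datta retriangulation argument then lets me strip off balanced handles and balanced connect summands one at a time, eventually reducing $\Delta$ to $\partial C_d^*$ and exhibiting it as a member of $\BH^d$.

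The main obstacle is the colored rigidity / Lefschetz step. Producing a balanced Lefschetz element that is injective from degree one to degree two requires a coloring-aware rigidity theorem for the 1-skeleton of a balanced manifold (not merely of a balanced sphere), and the exact coefficient $4\binom{d}{2}$ in front of $\beta_1$ depends on tracking the coloring through the entire socle-duality argument of Novik--Swartz. The failure of the equality characterization at $d=4$ is a warning sign that the inductive step breaks down once the link dimension drops to $2$, so the induction must genuinely begin at $d=5$ and the low-dimensional case may require separate ad hoc analysis.
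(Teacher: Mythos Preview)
The paper does not prove this theorem. Theorem~\ref{BLBT-manifolds} is quoted from \cite{Klee-Novik} and \cite{BLBT} purely as background motivation for the authors' Charney--Davis-type conjecture on flag $3$-manifolds; no proof or proof sketch appears anywhere in the paper. So there is no ``paper's own proof'' to compare your proposal against.

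For what it is worth, your outline is broadly in the spirit of the actual arguments in the cited references: the inequality in \cite{BLBT} does go through Schenzel's formula and an $h''$-vector computation, and the key technical input is a balanced rigidity/weak-Lefschetz statement in degree one (proved in \cite{Klee-Novik} for spheres and extended to manifolds). The equality case in \cite{BLBT} is handled by showing that all vertex links satisfy the sphere equality case and then invoking a balanced analogue of the stackedness characterization. So your high-level strategy is the right one, though the details you flag as obstacles (the colored rigidity step for manifolds, the precise bookkeeping that produces the constant $4\binom{d}{2}$, and the breakdown of the equality argument at $d=4$) are genuine and are exactly where the work in those papers lies.
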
  
In particular, the above theorems imply that if $\Delta$ is a simplicial $(d-1)$-manifold (balanced simplicial $(d-1)$-manifold, resp.) that attains the lower bound on $g_2$ ($\bar{g}_2$, resp.), then its geometric realization $\|\Delta\|$ could be (the connected sum of) sphere bundles over the circle but could not be a $(d-1)$-dimensional torus, $\R P^{d-1}$, etc. 

As suggested in \cite{Gal}, the analog of $g$-numbers for any flag $(d-1)$-manifold $\Delta$ are the $\gamma$-numbers. In what follows, we will only consider the first two $\gamma$-numbers of $\Delta$, given by $$\gamma_1(\Delta)=f_0(\Delta)-2d, \quad \gamma_2(\Delta)=f_1(\Delta)-(2d-3)f_0(\Delta)+2d(d-2).$$ The following statements are the celebrated Davis-Okun theorem \cite{DavisOkun-dimension 3 CD conjecture} and a special case of the $\gamma$-conjecture \cite{Gal} (which we will call the $\gamma_2$-conjecture). The $\gamma_2$-conjecture has been verified for several classes of flag simplicial spheres; see, for example, \cite{Karu} and \cite{Nevo-Petersen}.
\begin{theorem}[\cite{DavisOkun-dimension 3 CD conjecture}]
	Let $\Delta$ be a flag simplicial 3-sphere. Then $\gamma_2(\Delta)\geq 0$.
\end{theorem}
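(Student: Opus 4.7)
The plan is to follow the Davis--Okun approach through $L^2$-cohomology of right-angled Coxeter groups. Given the flag 3-sphere $\Delta$, let $W = W_\Delta$ be the right-angled Coxeter group with one involutive generator $s_v$ for each vertex $v \in V(\Delta)$, subject to $s_v s_w = s_w s_v$ precisely when $\{v,w\} \in \Delta$; build the Davis--Moussong complex $\Sigma = \Sigma_\Delta$, a CAT(0) cube complex on which $W$ acts properly and cocompactly, with fundamental chamber the cone on the barycentric subdivision of $\Delta$. Because $\Delta$ is a PL 3-sphere, $\Sigma$ is a contractible topological 4-manifold. Atiyah's $L^2$-index theorem then supplies the master identity
\[
\chi(W\backslash \Sigma) \;=\; \chi^{(2)}(\Sigma) \;=\; \sum_{i=0}^{4}(-1)^i\,b^{(2)}_i(\Sigma),
\]
and the strategy is to prove (a) $\chi(W\backslash \Sigma) = \gamma_2(\Delta)/16$, and (b) $b^{(2)}_i(\Sigma) = 0$ for $i \neq 2$, so that the right-hand side collapses to the nonnegative quantity $b^{(2)}_2(\Sigma)$.

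First I would dispatch the combinatorial part (a). The fundamental chamber is stratified by the simplices of $\Delta$ together with the empty simplex; the stabilizer of the stratum indexed by $\sigma$ is the parabolic $W_\sigma \cong (\Z/2)^{|\sigma|}$. The standard formula for the orbifold Euler characteristic of a cocompact action then gives
\[
\chi(W\backslash\Sigma) \;=\; \sum_{\sigma\in\Delta\cup\{\emptyset\}} \frac{(-1)^{|\sigma|}}{2^{|\sigma|}} \;=\; \sum_{i=-1}^{3}\Bigl(-\tfrac12\Bigr)^{i+1} f_i(\Delta).
\]
Substituting the Dehn--Sommerville relations for a simplicial 3-sphere, $f_2 = 2f_1 - 2f_0$ and $f_3 = f_1 - f_0$, this collapses to $(f_1 - 5f_0 + 16)/16 = \gamma_2(\Delta)/16$. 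This half is an essentially mechanical computation once the Coxeter-theoretic framework is in place.

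The main obstacle is the analytic half (b): the concentration of the reduced $L^2$-cohomology of $\Sigma$ in the middle dimension. This is an instance of the Singer conjecture, wide open in general, which Davis--Okun establish for Davis complexes of flag 3-spheres by an intricate induction. I would reproduce their argument by filtering $\Sigma$ by sub-Davis-complexes $\Sigma_A$ indexed by full flag subcomplexes $A \subseteq \Delta$, decomposing reduced $L^2$-cohomology via Mayer--Vietoris over such an equivariant cover, and inducting on the combinatorial complexity of $\Delta$; the base case exploits that for any flag 2-sphere $L'$ (for instance, a vertex link in $\Delta$), Andreev's theorem realizes $W_{L'}$ as a geometric reflection group on $\mathbb{H}^3$, whose $L^2$-Betti numbers vanish in every dimension. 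Propagating vanishing one dimension up to $\Sigma$ through this Mayer--Vietoris machinery is the core technical difficulty; once vanishing is secured for $i = 0, 1$, Poincar\'e duality for the oriented contractible 4-manifold $\Sigma$ forces it for $i = 3, 4$, and combining (a) and (b) yields $\gamma_2(\Delta) = 16\, b^{(2)}_2(\Sigma) \geq 0$.
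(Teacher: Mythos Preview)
The paper does not give its own proof of this statement: it is quoted from \cite{DavisOkun-dimension 3 CD conjecture} as the Davis--Okun theorem and used as background. Your proposal is therefore an outline of the original Davis--Okun argument rather than a competitor to anything in the paper, and at that level it is structurally accurate: pass from $\Delta$ to the right-angled Coxeter group $W_\Delta$ and its Davis complex $\Sigma$, identify the Charney--Davis quantity $\gamma_2(\Delta)/16$ with the rational Euler characteristic of $W_\Delta$, rewrite this via Atiyah as the alternating sum of $L^2$-Betti numbers of $\Sigma$, and then establish the Singer-type concentration $b^{(2)}_i(\Sigma)=0$ for $i\neq 2$ by an induction over full subcomplexes using $L^2$-Mayer--Vietoris and Poincar\'e duality.

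Two details in your sketch deserve correction. First, the symbol $\chi(W\backslash\Sigma)$ is misleading: the honest topological quotient is the fundamental chamber, which is contractible with ordinary Euler characteristic $1$. The quantity equal to $\chi^{(2)}(\Sigma)$ by Atiyah, and the one your stabilizer sum actually computes, is the \emph{orbifold} Euler characteristic (equivalently the rational Euler characteristic of the virtually torsion-free group $W$). Second, it is not true that Andreev's theorem realizes $W_{L'}$ on $\mathbb{H}^3$ for \emph{every} flag $2$-sphere $L'$: the octahedral $2$-sphere, for instance, yields $W_{L'}\cong D_\infty^3$, a Euclidean reflection group generated by reflections in the facets of a cube, and more generally suspensions fail Andreev's hypotheses. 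Davis and Okun treat these residual links by separate arguments (product structure, amenability), so your base case needs that patch before the induction is complete.
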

\begin{conjecture}[\cite{Gal}]
	Let $\Delta$ be a flag simplicial $(d-1)$-sphere. Then $\gamma_2(\Delta)\geq 0$.
\end{conjecture}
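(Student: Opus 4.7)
The plan is induction on $d$, using the Davis-Okun theorem stated immediately above as the base case $d=4$. For a flag $(d-1)$-sphere $\Delta$ with $d \geq 5$, every vertex link $\lk(v,\Delta)$ is itself a flag $(d-2)$-sphere by part (2) of Lemma~\ref{lm: flag prop}, so the inductive hypothesis gives $\gamma_2(\lk(v,\Delta)) \geq 0$ for every vertex $v$. The remaining question is whether this pointwise positivity can be aggregated into $\gamma_2(\Delta) \geq 0$.

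The natural bridge is the double-counting identity $\sum_{v \in V(\Delta)} f_i(\lk(v,\Delta)) = (i+2)\, f_{i+1}(\Delta)$, obtained by counting incidences between vertices and $(i+1)$-faces. Summing the definition $\gamma_2(\lk v) = f_1(\lk v) - (2d-5)f_0(\lk v) + 2(d-1)(d-3)$ over all vertices yields
\[
\sum_{v \in V(\Delta)} \gamma_2(\lk(v,\Delta)) \;=\; 3 f_2(\Delta) - 2(2d-5)\, f_1(\Delta) + 2(d-1)(d-3)\, f_0(\Delta).
\]
The hope would be to rewrite the right-hand side as $c_1 \gamma_2(\Delta) + c_2 \gamma_1(\Delta) + R$ with $c_1 > 0$ and $R \geq 0$, using the Dehn-Sommerville relations for simplicial $(d-1)$-spheres to eliminate $f_2$, and then to conclude $\gamma_2(\Delta) \geq 0$ from the inductive nonnegativity of the left-hand side.

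The hard part is precisely that this elimination fails cleanly once $d-1 \geq 4$. In the Davis-Okun dimension $d-1 = 3$, Dehn-Sommerville determines $f_2$ from $f_0$ and $f_1$, which is exactly why that case admits a self-contained treatment. For $d \geq 5$ the number $f_2$ is genuinely free, and no flag-sphere analog of the generalized lower bound inequalities (bounding $f_2$ from below by an appropriate combination of $f_0, f_1$) is currently available; supplying such a bound is essentially equivalent to the $\gamma_2$-conjecture itself, which is why it remains open outside the restricted classes treated in \cite{Karu} and \cite{Nevo-Petersen}. An alternative route, using the Lutz-Nevo theorem to reduce to the octahedral $(d-1)$-sphere $\partial C^*_d$ (where $\gamma_2 = 0$) via edge contractions, also stalls: neither edge subdivision nor admissible flag contraction is monotone on $\gamma_2$, and controlling the sign of the change at each move appears to require the very local inequalities one is trying to establish. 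I would therefore expect progress to come from combining the inductive link identity above with a new rigidity- or Hodge-theoretic bound on $f_2$ for flag spheres, rather than from pure Euler-relation manipulation.
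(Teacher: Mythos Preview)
The statement you are addressing is labeled a \emph{Conjecture} in the paper, and the paper does not supply a proof; it simply records that the $\gamma_2$-conjecture is open in general and cites \cite{Karu} and \cite{Nevo-Petersen} for partial results. So there is no proof in the paper to compare your proposal against.

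Your write-up is not actually a proof attempt but an explanation of why the obvious inductive strategy via vertex links fails, and you explicitly acknowledge at the end that the problem remains open. That diagnosis is accurate: the summed-link identity you derive is correct, and your observation that eliminating $f_2$ for $d\geq 5$ would require an inequality tantamount to the conjecture itself is the standard obstruction. In short, you have correctly identified that this is an open problem and have given a reasonable account of why the naive approaches stall; there is no gap to point to because you are not claiming a proof.
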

As we see from Theorems \ref{LBT-manifolds} and \ref{BLBT-manifolds}, the lower bound theorem for (balanced) simplicial spheres can be extended to (balanced) simplicial manifolds. A natural question to ask is: does there also exist a generalization of the Davis-Okun theorem to the class of flag simplicial 3-manifolds? Is there a reasonable manifold $\gamma_2$-conjecture?

\subsection{Generating flag triangulations of 3-manifolds}
In this subsection, we collect data on certain flag 3-manifolds $\Delta$ to test whether there is relation between the minimum $\gamma_2(\Delta)$ and $\beta_1(\Delta)$. The candidates of types of manifolds that could attain small $\gamma_2$ with respect to $\beta_1$ are (the connected sum) of sphere bundles over the circle. To obtain a flag triangulation of a 3-manifold $M$, one can always take the barycentric subdivision of any triangulation of $M$. However,  taking barycentric subdivisions usually generates a lot of new vertices. In what follows we introduce another useful way to construct flag triangulations of products of manifolds, and discuss how to apply the flag connected sum. This applies to finding small flag triangulations of $\#_i \Sp^2\times \Sp^1$,  $\#_i \Sp^2\twprod \Sp^1$ and $\#_i \Sp^1\times \Sp^1\times \Sp^1$.
 
{\textbf{Step 1: Construct the flag triangulations of products.}

The method of constructing triangulations of products is well known; see, for example, \cite{Lee} and \cite{Eilenberg-Steenrod}. We follow the description in \cite[Section 3]{Lutz}. Let $\Delta_1$ and $\Delta_2$ be the flag triangulations of two manifolds $M_1$ and $M_2$, respectively. A cell decomposition of $M_1\times M_2$ is given by taking the union of all cells $\sigma_m\times \sigma_n$, where $\sigma_m=\{u_0, u_1, \dots, u_m\}$ is an $m$-face of $\Delta_1$ and $\sigma_n=\{v_0, v_1, \dots, v_n\}$ is an $n$-face of $\Delta_2$. Identify the vertices $\{(u_i, v_j): 0\leq i\leq m,\; 0\leq j \leq n\}$ with the points $\{(i,j): 0\leq i\leq m,\;0\leq i\leq n\}$ in $\Z^2$. Then the set of all monotone increasing lattice paths $(u_{i_0}=u_0, v_{i_0}=v_0)-(u_{i_1}, v_{i_1})-\dots -(u_{i_{m+n}}=u_m, v_{i_{m+n}}=v_n)$ corresponds to the set of facets $\{(u_{i_0}, v_{i_0}), \dots, (u_{i_{m+n}}, v_{i_{m+n}})\}$ of $\sigma_m\times \sigma_n$. This is called the \emph{staircase triangulation of product of simplices}. Furthermore, if the vertices of $\Delta_1$ and $\Delta_2$ are totally ordered, then the union of all faces of the staircase triangulation of $\sigma_m\times \sigma_n$ gives a \emph{consistent} product triangulation of $M_1\times M_2$. We call it the \emph{staircase triangulation of $\Delta_1\times \Delta_2$}.
\begin{lemma}
	If $\Delta_1$ and $\Delta_2$ are flag triangulations of the manifolds $M_1$ and $M_2$, respectively, then the staircase triangulation $\Delta$ of $\Delta_1\times \Delta_2$ is also flag.
\end{lemma}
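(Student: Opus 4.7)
The plan is to verify flagness of $\Delta$ by showing that any clique in its $1$-skeleton is already a face, reducing to the flagness of the factors. The key preliminary is a clean characterization of the edges of $\Delta$: two distinct vertices $(u,v), (u',v') \in V(\Delta_1)\times V(\Delta_2)$ span an edge of $\Delta$ if and only if they are comparable in the product order induced by the fixed total orderings on $V(\Delta_1)$ and $V(\Delta_2)$, and in each factor the projections either coincide or are joined by an edge (i.e., $u = u'$ or $\{u,u'\} \in \Delta_1$, and similarly for $v,v'$). This follows by unpacking the construction: any edge of $\Delta$ lies inside the staircase subdivision of some product cell $\overline{\sigma_m} \times \overline{\sigma_n}$ with $\sigma_m \in \Delta_1$, $\sigma_n \in \Delta_2$, and the edges of that subdivision are exactly the comparable pairs of lattice points.

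Given this, suppose $F = \{(u_0,v_0), \ldots, (u_k,v_k)\}$ is a set of vertices of $\Delta$ every pair of which forms an edge of $\Delta$. Pairwise comparability in a partial order forces $F$ to be totally ordered, so after relabeling $(u_0,v_0) < (u_1,v_1) < \cdots < (u_k,v_k)$, which means $u_0 \le u_1 \le \cdots \le u_k$ and $v_0 \le v_1 \le \cdots \le v_k$ in the given linear orders. Let $\sigma \subseteq V(\Delta_1)$ and $\tau \subseteq V(\Delta_2)$ be the sets of distinct first and second coordinates, respectively. The edge condition applied factor-by-factor says $\sigma$ is a clique in $G(\Delta_1)$ and $\tau$ is a clique in $G(\Delta_2)$, so by flagness of the factors $\sigma \in \Delta_1$ and $\tau \in \Delta_2$.

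Finally, $F$ is a chain in the product poset $\sigma \times \tau$ under the induced total orders, that is, a monotone lattice path inside the cell $\overline{\sigma} \times \overline{\tau}$; by the definition of the consistent staircase triangulation of $\Delta_1 \times \Delta_2$, this is a face of $\Delta$, so $F \in \Delta$.

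The main obstacle is bookkeeping rather than any genuine difficulty: one must (a) prove the edge characterization cleanly from the definition of the staircase triangulation of a product of simplices, and (b) justify that a subset of a partially ordered set in which every pair is comparable is automatically a chain. Neither the manifold hypothesis nor any topological property of $\Delta_1$ or $\Delta_2$ is used, so the argument in fact shows that staircase triangulations preserve flagness for arbitrary flag complexes with ordered vertex sets.
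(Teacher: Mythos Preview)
Your proof is correct and follows essentially the same route as the paper's: both arguments show that the vertices of any clique in $G(\Delta)$ are pairwise comparable in the product order, project to cliques (hence faces, by flagness) $\sigma\in\Delta_1$ and $\tau\in\Delta_2$, and therefore lie on a monotone lattice path in the cell $\sigma\times\tau$, which is a face of the staircase triangulation. The only cosmetic difference is that the paper phrases this via a missing face of size $>2$ leading to a contradiction, while you argue directly that every clique is a face; your explicit edge characterization and the remark that the manifold hypothesis is unused are nice clarifications not spelled out in the paper.
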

\begin{proof}
	Assume that the vertices of $\Delta_1$ and $\Delta_2$ are totally ordered as $(u_0, u_1, \dots)$ and $(v_0, v_1, \dots)$, respectively. Suppose  $F$ is a missing face of $\Delta$ of size $k > 2$. 
	
	First we claim that there is an order of $V(F) = \{s_1,s_2,\dots,s_k\}$ where $s_i = (u_{p_i}, v_{q_i})$ such that $p_1 \leq p_2 \leq \dots \leq p_k$ and $q_1 \leq q_2 \leq \dots \leq q_k$. Indeed, if for some $i<j$ we have $p_i<p_j$ and $q_i>q_j$ (or similarly, $p_i>p_j$ and $q_i<q_j$), then the points $(p_i, q_i)$ and $(p_j, q_j)$ do not belong to any monotone increasing lattice path in $\Z^2$, and hence $\{s_i, s_j\}\notin \Delta$, a contradiction.
	
	By the definition of the staircase triangulation, any pair of distinct $u_{p_i}$ forms an edge in $\Delta_1$. Hence, $P = \{u_{p_1}, u_{p_2}\dots u_{p_k}\}$ is a clique in $\Delta_1$. Since $\Delta_1$ is flag, we have $P \in \Delta_1$. Similarly, $Q = \{v_{q_1}, v_{q_2}, \dots, v_{q_k}\}\in \Delta_2$. In other words, all the vertices of $F$ are in the cell $P \times Q$. Since the vertices of $F$ are in an increasing order in both $u$ and $v$, they are also lattice points on some monotone increasing lattice path from $(p_1, q_1)$ to $(p_k, q_k)$. Hence $F$ is a face in the staircase triangulation of $P \times Q$, i.e., $F \in \Delta$, which leads to a contradiction.
\end{proof}

As an illustration, to form flag triangulations of $\Sp^2\times \Sp^1$ (orientable) and $\Sp^2\twprod \Sp^1$ (nonorientable), we take the flag triangulation of $\Sp^2$ as the octahedral 2-sphere and the flag triangulation of $\Sp^1$ as the 4-cycle. Based on the above lemma, we generate two triangulations as shown in Figure \ref{fig: flag triangulation of sphere bundle}.
\begin{figure}
	\begin{multicols}{2}
	\centering
	\includegraphics[scale=0.8]{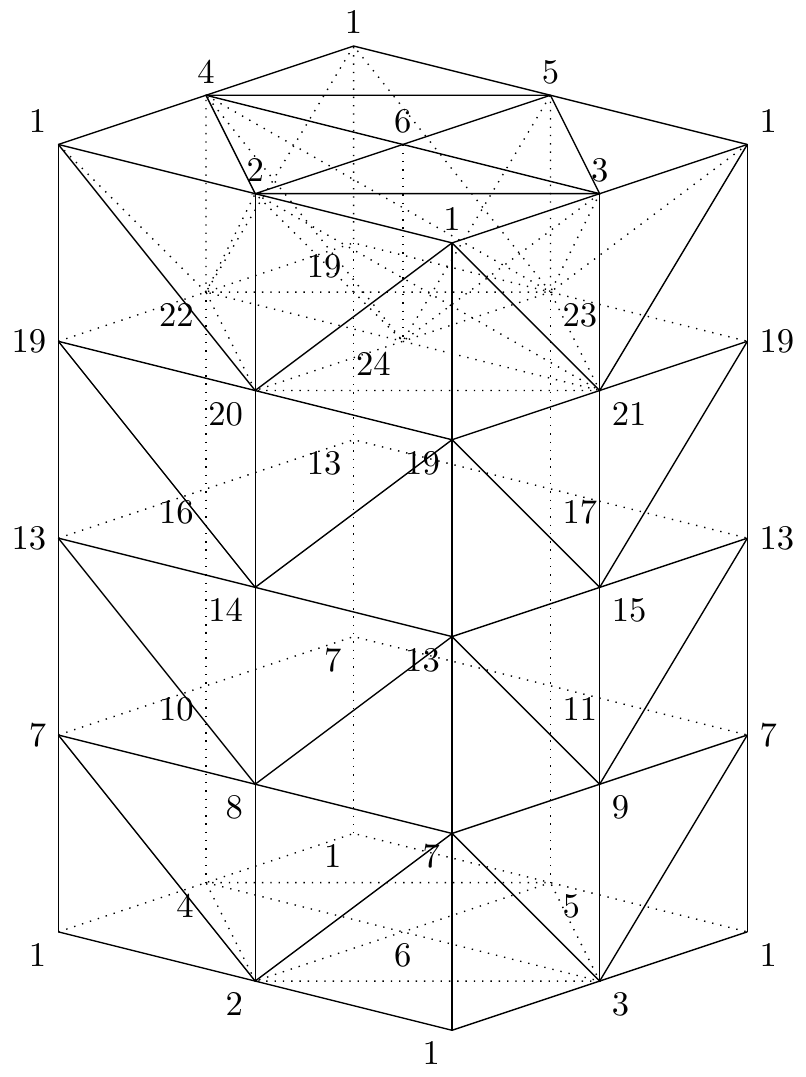}
	\includegraphics[scale=0.8]{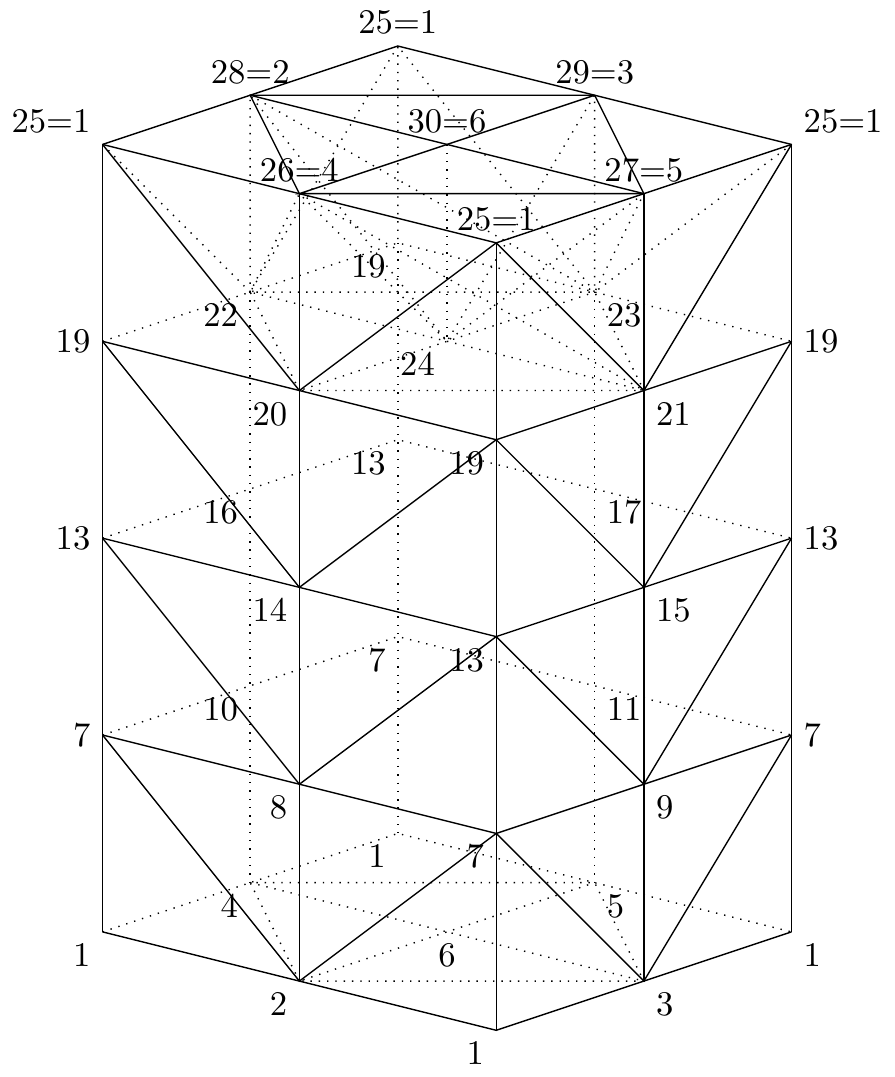}
	\end{multicols}
\caption{A triangulation of the orientable and non-orientable 3-dimensional sphere bundles over $\Sp^1$, respectively. Only the edges in the top layer of the prism are shown for simplicity.}\label{fig: flag triangulation of sphere bundle}
\end{figure}

{\textbf{Step 2: Apply the flag connected sum.}} 

To generate the flag connected sum of flag 3-manifolds, we identify isomorphic edge stars and then remove all the faces containing the corresponding edges. (We use the fact that the edge stars are induced subcomplexes in the flag complexes whose boundaries are also flag.) One advantage of applying this special flag connected sum is that it does little change to the graphs of the complexes.

{\textbf{Step 3: Search for minimal $\gamma_2$ by the Lutz-Nevo Theorem.}}

We generate a few flag triangulations of 3-manifolds by using either the staircase triangulation or the barycentric subdivision of the minimal (non-flag) triangulation. From the Lutz-Nevo theorem and our implementation as described in Section \ref{Section 3}, we obtain data on estimated minimum $\gamma_2$ for various triangulated 3-manifolds, summarized in Table \ref{table: gamma_2 of flag 3-manifolds}.

\begin{table}
	\caption{Estimated minimal $\gamma_2$ for several flag 3- and 4-manifolds; here $[n]=\{1,2,\dots, n\}$}\label{table: gamma_2 of flag 3-manifolds}
	\begin{center}
		\begin{tabular}{ c c c}\hline\hline
			& $\beta_1$ & minimum $\gamma_2$ \\ \hline
			$\Sp^3$ & 0 & 0 \\ \hline
			$\R P^3$ & 0 & 38 \\\hline
			$L(3,1)$ & 0 & 82 \\ \hline
			$\#_i(\Sp^2\times \Sp^1)$, for $i\in [10]$ & $i$ & $16i$ \\ \hline
			$\#_i(\Sp^2\twprod\Sp^1)$ for $i\in [10]$ & $i$ & $16i$ \\ \hline
			$\#_i(\Sp^1\times\Sp^1\times \Sp^1)$ for $i=1,2$ & $3i$ & $112i$\\ \hline
			$\#_i \Sp^3\times \Sp^1$, for $i\in [4]$	& $i$ & $30i$ \\ \hline
			\hline
		\end{tabular}
	\end{center}
\end{table}

We propose the following 3-dimensional manifold Charney-Davis conjecture.

\begin{conjecture}
	Let $\Delta$ be a flag 3-manifold. Then $\gamma_2(\Delta)\geq 16\beta_1(\Delta)$. 
\end{conjecture}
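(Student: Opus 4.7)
The plan is to mirror the Novik--Swartz strategy for the manifold lower bound theorem (Theorem~\ref{LBT-manifolds}) in the flag setting, with Davis--Okun as the base case. Fix a connected flag $3$-manifold $\Delta$; connectedness is harmless since both sides of the inequality are additive over components. I would induct on $\beta_1(\Delta)$.

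The base case $\beta_1(\Delta)=0$ asserts $\gamma_2(\Delta)\geq 0$ for flag rational homology $3$-spheres, a genuine extension of Davis--Okun. I would attempt it by pushing the $L^2$-cohomology argument of \cite{DavisOkun-dimension 3 CD conjecture} through for the Davis complex $\Sigma_\Delta$ of the right-angled Coxeter group $W_\Delta$: the Charney--Davis invariant is an $L^2$-Euler characteristic on $\Sigma_\Delta$, and Singer's conjecture in dimension $3$ combined with the $\field$-acyclicity of $\Delta$ in degree $1$ should still force vanishing of the off-middle $L^2$-Betti numbers of $\Sigma_\Delta$, yielding $\gamma_2\geq 0$.

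For the inductive step $\beta_1(\Delta)\geq 1$, I would attempt a \emph{flag handle destabilization}: locate an induced flag $2$-sphere $S\subset\Delta$ dual to a generator of $H_1(\Delta;\field)$, cut $\Delta$ along $S$, and cap off the two resulting boundary spheres with induced flag $3$-balls to produce a flag $3$-manifold $\Delta'$ with $\beta_1(\Delta')=\beta_1(\Delta)-1$. Lemma~\ref{lm: flag connected sum} provides the inverse flag handle addition. The goal is then the local inequality $\gamma_2(\Delta)-\gamma_2(\Delta')\geq 16$. Since every vertex link in a flag $3$-manifold is a flag $2$-sphere with at least $6$ vertices, and the link condition of Lemma~\ref{lm: flag prop}(4) forces induced $4$-cycles across any separating flag $2$-sphere, a careful local count on how the handle interacts with the rest of $\Delta$ should yield the required constant $16$, with the tight case matching the sphere-bundle extremals in Table~\ref{table: gamma_2 of flag 3-manifolds}.

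The hardest step will be the base case: a manifold Charney--Davis inequality for $\beta_1=0$ likely requires new $L^2$-cohomological input beyond Davis--Okun, rather than a purely combinatorial manipulation. A secondary obstacle is guaranteeing that an admissible flag handle destabilization exists whenever $\beta_1>0$; one may need to first normalize $\Delta$ via flag edge subdivisions and admissible edge contractions (Theorem~\ref{thm: Nevo-Lutz}), while tracking $\gamma_2$ carefully through these moves. Finally, the local count $\gamma_2(\Delta)-\gamma_2(\Delta')\geq 16$ splits into subcases on the combinatorial type of the separating $2$-sphere; the tight case is expected to be the octahedral sphere on $6$ vertices, consistent with the coefficient $16$ in the conjecture and with the $12$-vertex flag triangulations of $\Sp^2\times\Sp^1$ and $\Sp^2\twprod\Sp^1$ appearing in the computer search.
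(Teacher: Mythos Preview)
The statement you are attempting to prove is presented in the paper as a \emph{conjecture}, not a theorem. The paper offers no proof; it only supplies supporting computational evidence (Table~\ref{table: gamma_2 of flag 3-manifolds}) and establishes tightness by constructing, for every $b\geq 0$, a flag $3$-manifold with $\beta_1=b$ and $\gamma_2=16b$. So there is nothing in the paper to compare your argument against.

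Your outline is a reasonable heuristic for why one might believe the conjecture, but it is not a proof, and you yourself flag the genuine gaps. The base case $\beta_1=0$ is already stronger than Davis--Okun: it asks for $\gamma_2\geq 0$ for all flag $\field$-homology $3$-spheres, not just simplicial $3$-spheres, and the Davis--Okun $L^2$-argument does not obviously go through without the actual sphere hypothesis on the nerve. For the inductive step, you assume the existence of an \emph{induced} flag $2$-sphere dual to a given homology class, plus the ability to cap it off with flag $3$-balls so that the result remains flag; neither is guaranteed, and Theorem~\ref{thm: Nevo-Lutz} does not let you normalize your way into such a configuration while controlling $\gamma_2$. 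Finally, the local inequality $\gamma_2(\Delta)-\gamma_2(\Delta')\geq 16$ is asserted rather than derived: Lemma~\ref{lm: gamma2 identity} gives the \emph{exact} change under a flag handle addition along an edge star, namely $2\gamma_1(\lk(e))+16$, which equals $16$ only when $\lk(e)$ is a $4$-cycle; in general the separating flag $2$-sphere need not be an edge link at all, so the bookkeeping is far more delicate than a single subcase analysis. In short, what you have written is a plausible strategy, not a proof, and the paper makes no claim to have one.
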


Several remarks are in order. First, it is not surprising that the estimated minimum $\gamma_2$ could be achieved by many distinct flag triangulations of a given manifold. Second, the data suggest that the minimum $\gamma_2$-numbers for flag triangulations of certain 3-manifolds are highly linear with respect to their first Betti numbers. Third, among the 3-manifolds having the same $\beta_1$, the connected sum of sphere bundles over the circle has smaller $\gamma_2$. Fourth, although the lower bound on $\gamma_2$ for higher dimensional flag manifolds is also of interest, our program is not efficient enough to get reliable estimation in these cases. For sake of completeness, we include the results on the connected sum of $\Sp^3\times \Sp^1$ in Table \ref{table: gamma_2 of flag 3-manifolds}. (It is possible that there is a more general conjecture $\gamma_2\geq 2d(d-2)\beta_1$. For lack of evidence, we exclude it from our conjecture.)

We close by showing the lower bound on $\gamma_2$ in the conjecture, if true, is tight. First we define the handle addition on flag complexes.
\begin{definition}
	Let $\Delta$ be a pure flag simplicial complex of dimension $d-1$. For
any two vertices $u$ and $v$, let $\dist(u,v)$ be the length of a shortest path
from $u$ to $v$ in $\Delta$. Assume that there are two disjoint subsets $W_1,
W_2\subset V(\Delta)$ such that 1) both $\Delta[W_1]$ and $\Delta[W_2]$ are
simplicial $(d-1)$-balls, 2) $\partial \Delta[W_1]$ and $\partial \Delta[W_2]$
are isomorphic flag simplicial $(d-2)$-spheres, where $\phi: \partial
\Delta[W_1]\to \partial\Delta[W_2]$ is a simplicial isomorphism, and 3)
$\dist(u,v)\geq 4$ for every $u\in W_1$ and $v\in W_2$. The simplicial complex
$\Delta^\phi$ obtained from $\Delta$ by removing all interior faces of
$\Delta[W_1]$ and $\Delta[W_2]$ and identifying each $v\in W_1$ with $\phi(v)$
is called a \emph{flag handle addition} to $\Delta$.   
\end{definition}
\begin{lemma}
	If $\Delta$ is flag, then a flag handle addition $\Delta^\phi$ is flag.
\end{lemma}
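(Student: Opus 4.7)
The plan is to adapt the argument from Lemma~\ref{lm: flag connected sum}. Suppose for contradiction that $\Delta^\phi$ has a minimal non-face $F$ with $|F|\geq 3$, so every pair from $F$ is an edge of $\Delta^\phi$; the goal is to produce a lift $\tilde F\subset V(\Delta)$ forming a clique, so that flagness of $\Delta$ gives $\tilde F\in\Delta$ and hence $F\in\Delta^\phi$, the desired contradiction. For bookkeeping, write $B_i:=V(\partial\Delta[W_i])$ and $V_0:=V(\Delta)\setminus(W_1\cup W_2)$, and let $B\subset V(\Delta^\phi)$ denote the identified boundary; each $b\in B$ thus has two preimages $b_1\in B_1$ and $b_2=\phi(b_1)\in B_2$ in $\Delta$, and $V(\Delta^\phi)=V_0\sqcup B$.

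I would first handle the two easy cases. If $F\subset V_0$, then every pair of $F$ is already an edge of $\Delta$ (no identification is involved), so flagness of $\Delta$ yields $F\in\Delta\subset \Delta^\phi$. If $F\subset B$, then since $\phi$ is a simplicial isomorphism, each pair in $F$ lifts to an edge of $\partial\Delta[W_1]$, and flagness of $\partial\Delta[W_1]$ gives $F\in\partial\Delta[W_1]\subset\Delta^\phi$.

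The main case is when $F$ meets both $V_0$ and $B$; this is precisely where condition~(3) of the definition is used. For any $a\in V_0$ adjacent in $\Delta$ to some vertex of $W_1$, the hypothesis $\dist(W_1,W_2)\geq 4$ gives $\dist(a,W_2)\geq 3$, so $a$ has no neighbor in $W_2$. Consequently each $a\in F\cap V_0$ has a well-defined \emph{side} $s(a)\in\{1,2\}$, characterized by the fact that $\{a,b_{s(a)}\}\in\Delta$ for every $b\in F\cap B$. If two vertices $a,a'\in F\cap V_0$ had $s(a)\neq s(a')$, then for any $b,b'\in F\cap B$ the sequence $b_1,a,a',b'_2$ would give a length-$3$ path in $\Delta$ from $W_1$ to $W_2$ (its middle edge exists because $\{a,a'\}\subset F$ is an edge of $\Delta$), contradicting the distance hypothesis. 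So all $a\in F\cap V_0$ share a common side, say side~$1$, and the natural lift $\tilde F:=(F\cap V_0)\cup\{b_1:b\in F\cap B\}$ is a clique in $\Delta$: pairs inside $F\cap V_0$ and pairs inside $B_1$ are handled by the easy cases above, and mixed pairs by the side-$1$ choice. Flagness of $\Delta$ then gives $\tilde F\in\Delta$, and since $\tilde F$ contains a vertex of $V_0$ it is not interior to $\Delta[W_1]$ (and obviously not to $\Delta[W_2]$), so it descends to a face $F\in\Delta^\phi$.

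The main obstacle is the bookkeeping in this mixed case---in particular, pinning down that the distance-$4$ condition is precisely what rules out ``side-straddling'' edges within $F$, so that a single consistent lift $\tilde F$ always exists. The two easy cases and the final flagness invocation closely parallel the corresponding steps in the proof of Lemma~\ref{lm: flag connected sum}.
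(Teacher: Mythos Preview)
Your argument is correct and follows the same strategy as the paper's: lift a hypothetical missing face $F$ of $\Delta^\phi$ to a clique $\tilde F$ in $\Delta$, using the distance-$4$ hypothesis to force all lifts to lie on one side of the handle, and then invoke flagness of $\Delta$ (and of $\partial\Delta[W_1]$) to reach a contradiction. Your case analysis with the ``side'' function $s(a)$ is in fact a more careful version of the paper's terser treatment of the same dichotomy; the one cosmetic slip is writing ``$F\in\Delta\subset\Delta^\phi$'' in the $F\subset V_0$ case (the inclusion fails globally, though the intended conclusion that $F$ survives to $\Delta^\phi$ is correct).
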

\begin{proof}
	Let $V=V(\Delta)$ and $\bar{\phi}: \Delta\to \Delta^\phi$ be the handle addition map. Assume that $F$ is a missing face of $\Delta^\phi$ of size $>2$. If $\bar{\phi}^{-1}(F)\in \Delta$ contains a pair of vertices $u\in W_1$ and $v\in W_2$, it also contains a path from $u$ to $v$ in $\Delta$. However, $\dist(u,v)\geq 4$ and hence $F$ cannot be the clique on $V(F)$. Now assume that $V(F)\subseteq V\backslash W_2$. Since $\Delta[V\backslash W_2]$ is flag, we have $F\in \Delta[V\backslash W_2]$. Furthermore, $\Delta^\phi[W_1]=\partial \Delta[W_1]$ is flag implies that $F\in\Delta^\phi$, which leads to a contradiction.
\end{proof}
\begin{lemma}\label{lm: gamma2 identity}
	Let $\Delta_1$ and $\Delta_2$ be two flag 3-manifolds. Further assume that for two edges $e_1\in\Delta_1$ and $e_2\in\Delta_2$, there exists a simplicial isomorphism $\phi: \partial \st(e_1, \Delta_1)\to \partial\st(e_2, \Delta_2)$. Then $$\gamma_2(\Delta_1\#_\phi \Delta_2)=\gamma_2(\Delta_1)+\gamma_2(\Delta_2)+2\gamma_1(\lk(e_1, \Delta_1)).$$
  Similarly, if $\Delta$ is a flag 3-manifold containing two edges $\sigma_1, \sigma_2$ such that there is a simplicial isomorphism $\psi: \partial\st(\sigma_1, \Delta)\to \partial\st(\sigma_2, \Delta)$ that defines a flag handle addition on $\Delta$, then 
$$\gamma_2(\Delta^\psi)=\gamma_2(\Delta)+2\gamma_1(\lk(\sigma_1,\Delta))+16.$$
\end{lemma}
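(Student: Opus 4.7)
The approach is direct enumeration: both identities will follow from tracking how $f_0$ and $f_1$ change under the two operations and then substituting into $\gamma_2(\Delta)=f_1(\Delta)-5f_0(\Delta)+16$. The higher-dimensional face counts are irrelevant for $\gamma_2$, which simplifies matters considerably.

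First I would analyze the star of an edge $e=\{u,v\}$ in a flag $3$-manifold. Its link $C=\lk(e,\Delta)$ is an induced $k$-cycle, where $k:=f_0(C)$; and by flagness $\st(e,\Delta)=\bar e * C$ is the induced subcomplex on $V(e)\cup V(C)$, with boundary a flag $2$-sphere. A direct count gives $f_0(\st(e))=k+2$ and $f_1(\st(e))=3k+1$ (namely $k$ cycle edges, $2k$ cone edges, and the interior edge $e$), while $f_0(\partial\st(e))=k+2$ and $f_1(\partial\st(e))=3k$. Hence the interior faces of $\st(e)$ that are deleted under either operation contribute $0$ new vertices and exactly $1$ new edge (plus $k$ triangles and $k$ tetrahedra, which do not enter the $\gamma_2$ formula).

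For the flag connected sum $\Delta:=\Delta_1\#_\phi\Delta_2$, removing the interior faces of each $\st(e_i,\Delta_i)$ and identifying the two boundary $2$-spheres via $\phi$ (which forces a common link size $k$) yields
\[
f_0(\Delta)=f_0(\Delta_1)+f_0(\Delta_2)-(k+2),\qquad f_1(\Delta)=f_1(\Delta_1)+f_1(\Delta_2)-(3k+2).
\]
Substituting into $\gamma_2=f_1-5f_0+16$ and absorbing the two copies of $+16$ on the right gives $\gamma_2(\Delta)=\gamma_2(\Delta_1)+\gamma_2(\Delta_2)+(2k-8)$; noting that $\gamma_1(\lk(e_1,\Delta_1))=f_0(\lk(e_1))-4=k-4$ for a flag $1$-sphere, this rewrites as $\gamma_2(\Delta_1)+\gamma_2(\Delta_2)+2\gamma_1(\lk(e_1,\Delta_1))$, as desired.

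The handle addition case is structurally identical but performed inside a single complex, so there is one fewer copy of $+16$ available to cancel. The hypothesis $\dist(u,v)\geq 4$ for $u\in W_1$ and $v\in W_2$ guarantees that there are no edges at all between the two regions, and in particular no accidental edge identifications occur outside the two boundary spheres that get glued together; the face-count changes therefore match the connected sum case exactly:
\[
f_0(\Delta^\psi)=f_0(\Delta)-(k+2),\qquad f_1(\Delta^\psi)=f_1(\Delta)-(3k+2).
\]
This yields $\gamma_2(\Delta^\psi)=\gamma_2(\Delta)+(2k+8)=\gamma_2(\Delta)+2\gamma_1(\lk(\sigma_1,\Delta))+16$. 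The only delicate point is the bookkeeping in this handle-addition step: one must verify cleanly that the distance hypothesis precludes any unintended merging of edges between the $W_1$- and $W_2$-regions and that the removed interior faces and identified boundary faces are disjointly accounted for. This is a direct consequence of the hypotheses rather than a genuine obstacle, so I do not expect serious difficulty in carrying out the proof.
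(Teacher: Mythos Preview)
Your proof is correct and takes essentially the same approach as the paper: directly compute the changes in $f_0$ and $f_1$ under each operation and substitute into $\gamma_2=f_1-5f_0+16$. Your organization is in fact slightly cleaner than the paper's, since you record $f_0(\st e)=k+2$ and $f_1(\st e)=3k+1$ up front and then read off the changes uniformly, whereas the paper interleaves these counts into the final simplification; but the content is identical.
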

\begin{proof}
	By the definition of $\gamma_2$, 
	$$\gamma_2(\Delta_1\#_\phi \Delta_2) = f_1(\Delta_1\#_\phi \Delta_2)-5f_0(\Delta_1\#_\phi\Delta_2)+16.$$
Also by the definition of the flag connected sum,
$$f_1(\Delta_1\#_\phi \Delta_2)=f_1(\Delta_1)+f_1(\Delta_2)-f_1(\st(e_1, \Delta_1))-1,\quad f_0(\Delta_1\#_\phi \Delta_2)=f_0(\Delta_1)+f_0(\Delta_2)-f_0(\st(e_1,\Delta_1)).$$
Combining the above equations, we have $$\gamma_2(\Delta_1\#_\phi \Delta_2)=\gamma_2(\Delta_1)+\gamma_2(\Delta_2)+(-f_1(\st(e_1,\Delta_1))+5f_0(\st(e_1, \Delta_1)-17).$$
The last term on the right-hand side of the above equation is
	\begin{equation*}
	\begin{split}
-f_1(\st(e_1))+5f_0(\st(e_1))-17&=-(f_0(\lk(e_1))+1)+2f_0(\lk(e_1))+5(f_0(\lk(e_1))+2)-17\\
&=2f_0(\lk(e_1))-8=2\gamma_1(\lk(e_1)),
\end{split}
\end{equation*}
which proves the first claim. The proof of the second claim is similar:
\begin{equation*}
\begin{split}
\gamma_2(\Delta^\psi)&=(f_1(\Delta)-f_1(\st(e_1))-1)-5(f_0(\Delta)-f_0(\st(e_1)))+16\\
&=\gamma_2(\Delta)-(3f_0(\lk(e_1))+1)-1+5(f_0(\lk(e_1))+2)\\
&=\gamma_2(\Delta)+2\gamma_1(\lk(e_1))+16.
\end{split}
\end{equation*}
\end{proof}

\begin{proposition}
	For every positive integer $b$, there exists a flag 3-manifold $\Delta$ with $\beta_1(\Delta)=b$ and $\gamma_2(\Delta)=16 b$.
\end{proposition}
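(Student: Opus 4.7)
The plan is to iteratively apply flag handle additions to a carefully constructed flag $3$-sphere with $\gamma_2 = 0$, choosing each handle so that it raises $\gamma_2$ by exactly $16$ and $\beta_1$ by exactly $1$.

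First I would build a base flag $3$-sphere $\Delta_0$ with $\gamma_2(\Delta_0) = 0$. The boundary of the $4$-cross-polytope $\partial C_4^*$ has $f_0 = 8$ and $f_1 = 24$, hence $\gamma_2 = 0$, and every edge of $\partial C_4^*$ has link equal to a $4$-cycle; the star of such an edge is a flag $3$-ball whose boundary is the octahedral $2$-sphere. By Lemma \ref{lm: flag connected sum}, iteratively forming the flag connected sum of $N$ copies of $\partial C_4^*$ along edge-stars produces a flag $3$-sphere, and the first identity of Lemma \ref{lm: gamma2 identity} contributes $2\gamma_1(\text{4-cycle}) = 0$ at each step, so $\gamma_2(\Delta_0) = 0$. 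Arranging the $N$ copies in a linear chain yields a $\Delta_0$ whose combinatorial diameter grows linearly in $N$; each block retains private vertices (the two of the cross-polytope not sitting on an identified boundary octahedron) that are incident only to internal edges with $4$-cycle link, supplying a reservoir of well-separated such edges.

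Next, I would induct on $k$ from $0$ to $b$. Given $\Delta_{k-1}$ flag with $\gamma_2 = 16(k-1)$ and $\beta_1 = k-1$, pick two edges $\sigma_1, \sigma_2 \in \Delta_{k-1}$ with $4$-cycle links satisfying $\dist(u,v) \geq 4$ for all $u \in V(\st(\sigma_1))$ and $v \in V(\st(\sigma_2))$. Both $\partial \st(\sigma_i)$ are octahedral $2$-spheres, so a simplicial isomorphism $\psi_k$ between them exists, and $\Delta_k := \Delta_{k-1}^{\psi_k}$ is a well-defined flag handle addition. The second identity of Lemma \ref{lm: gamma2 identity} then yields
\[
\gamma_2(\Delta_k) = \gamma_2(\Delta_{k-1}) + 2\gamma_1(\text{4-cycle}) + 16 = 16k.
\]
Topologically, each handle addition attaches a $1$-handle to the connected $3$-manifold $\Delta_{k-1}$, i.e.\ forms a connected sum with either $\Sp^2 \times \Sp^1$ or $\Sp^2 \twprod \Sp^1$, each of which contributes $1$ to $\beta_1$. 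Since $\beta_1(\Delta_0) = 0$, we get $\beta_1(\Delta_b) = b$, and $\Delta := \Delta_b$ is the required flag $3$-manifold.

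The main obstacle is verifying that the distance hypothesis in the definition of flag handle addition can be fulfilled at every stage of the induction. A handle addition can introduce new paths through the attached handle, but it is a purely local operation: vertices far from both removed balls see their pairwise distances change only by the length of the new shortcut, which is itself bounded below by twice the distance from those vertices to the handle region. Thus, by choosing $N$ linear in $b$ and reserving a fresh pair of widely separated cross-polytope blocks for the $k$-th handle addition (for example, blocks $3k-2$ and $3k-1$ of the chain), the distance hypothesis can be maintained throughout. Carrying out this bookkeeping for an explicit chain layout is the main technical step of the proof.
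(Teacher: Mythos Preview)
Your strategy and the paper's share the same core ingredients: a flag $3$-sphere with $\gamma_2=0$ assembled as a chain of copies of $\partial C_4^*$ glued by flag connected sums along edge-stars, together with Lemma~\ref{lm: gamma2 identity} to show that a handle added along an edge with $4$-cycle link raises $\gamma_2$ by exactly $16$. The organizational difference is that the paper performs only \emph{one} handle addition. It builds a specific chain of $16$ cross-polytopes (in two stages, using a $4$-coloring to locate suitable edges and verify the distance-$4$ hypothesis), applies a single flag handle addition to obtain a unit $\Gamma$ with $\beta_1(\Gamma)=1$ and $\gamma_2(\Gamma)=16$, and then invokes the \emph{first} identity of Lemma~\ref{lm: gamma2 identity} to take the flag connected sum of $b$ copies of $\Gamma$ along edges with $4$-cycle link. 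Since the flag connected sum carries no distance hypothesis, the induction on $b$ becomes a one-line computation. Your plan instead performs $b$ handle additions on a single long chain, forcing you to re-establish the distance hypothesis after every handle---exactly the ``main technical step'' you leave open.

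There is also a concrete slip in your sketch: the suggested pair ``blocks $3k-2$ and $3k-1$'' are \emph{adjacent} blocks of the linear chain, so their edge-stars sit at distance at most $2$ or $3$, not $\geq 4$. A workable scheme would need pairs of blocks far from each other \emph{and} far from all previously used handle sites (for instance blocks $(2k-1)c$ and $2kc$ for a sufficiently large constant $c$), together with an argument that earlier handles cannot create shortcuts between later pairs. All of this bookkeeping is sidestepped by the paper's connected-sum trick.
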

\begin{proof}
	The construction is done in three steps.
	
	{\bf Step 1: Construct a 3-manifold $\Delta_4$ with two edges $e$ and $e'$ such that 1) their links are 4-cycles, and 2) the vertex sets of their stars are disjoint.}

	Let $\Gamma_1, \Gamma_2, \Gamma_3, \Gamma_4$ be four disjoint copies of
the octahedral 3-sphere, and let $\Delta_1=\Gamma_1$. We will inductively define
a complex $\Delta_i$ for $2\leq i\leq 4$ by
$\Delta_i=\Delta_{i-1}\#_{\phi_{i-1}}\Gamma_i$ as follows. 
Assign colors 1,2,3,4 to the vertices of each $\Gamma_i$ by giving antipodal
vertices the same color. Let $e$ and $e_1$ be distinct edges of color 1,2 in
$\Delta_1$, and let $e_1'$ be an edge of color 1,2 in $\Gamma_2$. There is a
color-preserving isomorphism
$\phi_1:\partial \st(e_1,\Delta_1)\to\partial\st(e_1',\Gamma_2)$, and we define
$\Delta_2=\Delta_1\#_{\phi_1}\Gamma_2$, which may be viewed as the join of the
4-cycle $\lk(e_1',\Gamma_2)$ colored by 3,4 and a 6-cycle colored by 1,2. 

Next choose $e_2=\{v_1,v_2\}\in\Delta_2$, where $v_1$ is a vertex of color 3 in
$\lk(e_1',\Gamma_2)$ and $v_2$ is a vertex of color 2 not in $\Delta_1$. Given
an edge $e_2'$ of color 2,3 in $\Gamma_3$, use a color-preserving isomorphism
$\phi_2:\partial \st(e_2,\Delta_2)\to\partial\st(e_2',\Gamma_3)$ to define
$\Delta_3=\Delta_2\#_{\phi_2}\Gamma_3$. Here we have that
$\lk(e_2',\Delta_3)$ is a cycle of colors 1,4.

Then choose the edge $e_3=\{v_3,v_4\}\in\Delta_3$ such that the vertex
$v_3\in\lk(e_2',\Gamma_3)$ is of color 4 and the vertex $v_4\notin \Delta_2$ is
of color 3. Given an edge $e_3'$ of color 3,4 in $\Gamma_4$, let $e'$ be its
antipodal edge in $\Gamma_4$, and use a color-preserving isomorphism
$\phi_3:\partial \st(e_3,\Delta_3)\to\partial\st(e_3',\Gamma_4)$ to define
$\Delta_4=\Delta_3\#_{\phi_3}\Gamma_4$. Then $e'$ is disjoint from $\Delta_1$
and does note share a color with $e$, thus we verify that the links of
$e$ and $e'$ in $\Delta_4$ are indeed disjoint.

	{\bf Step 2: Construct a flag 3-manifold $\Delta_{16}$ from $\Delta_4$ and apply flag handle addition.}
	
	Take four copies $\Delta_4^1, \Delta_4^2, \Delta_4^3, \Delta_4^4$ of the above $\Delta_4$ to form a flag connected sum $$\Delta_{16} =\Delta_4^1\#_{\psi_1}\Delta_4^2\#_{\psi_2}\Delta_4^3\#_{\psi_3}\Delta_4^4,$$
	where $\psi_i: \partial\st(e', \Delta_4^i)\to \partial\st(e, \Delta_4^{i+1})$ (in this step we forget the colors on the vertices). If $v\in \st(e, \Delta^1_4)$ and $v'\in \st(e', \Delta_4^4)$, then since any path from $v$ to $v'$ must pass one vertex from the identified stars $\st(e', \Delta_4^i)$ ($i=1,2,3$), it follows that $\dist(v, v')\geq 4$. Hence by identifying $\st(e, \Delta_4^1)$ with $\st(e', \Delta_4^4)$ and removing $e, e'$ in a flag handle addition, we obtain a new flag 3-manifold $\Gamma$ with $\beta_1(\Gamma)=1$. Both $\lk(e, \Delta^1_4)$ and $\lk(e', \Delta^4_4)$ are 4-cycles. Hence by Lemma \ref{lm: gamma2 identity}, $$\gamma_2(\Gamma)=\gamma_2(\Delta_{16})+2\gamma_1(\lk(e, \Delta^1_4))+16=16.$$ 
	
	{\bf Step 3: Generate a flag 3-manifold with arbitrary $\beta_1$.}
	
	This is done by taking the flag connected sum of $b$ copies of $\Gamma$ along the stars of edges whose links are 4-cycles. The resulting complex has $\gamma_2=16b$ and $\beta_1=b$. 
\end{proof}
\section{Acknowledgements}
The research was part of Lab of Geometry at Michigan projects offered by the Department of Mathematics at University of Michigan during the winter semester of 2019. We would like to thank Harrison Bray and many others who coordinated the projects. We also thank the referee for valuable feedback.

\end{document}